\newtheorem{theorem}{Theorem}[section]
\newtheorem{definition}[theorem]{Definition}
\newtheorem{lemma}[theorem]{Lemma}
\newtheorem{corollary}[theorem]{Corollary}
\newtheorem{conjecture}[theorem]{Conjecture}
\newtheorem{proposition}[theorem]{Proposition}
\theoremstyle{remark}
\newtheorem{remark}[theorem]{Remark}
\newtheorem{example}[theorem]{Example}
\newcommand{\cG}{\mathcal{G}}
\newcommand{\cH}{\mathcal{H}}
\newcommand{\Iviz}{I_{\mathrm{viz}}}
\newcommand{\Isos}{I_{\mathrm{sos}}}
\newcommand{\K}{\mathbb{K}}
\newcommand{\Kbar}{\overline{\mathbb{K}}}
\newcommand{\cmN}{\mathit{cmN}}
\newcommand{\Q}{\mathbb{Q}}
\newcommand{\R}{\mathbb{R}}
\newcommand{\sdp}{{\rm SDP}}
\DeclarePairedDelimiter{\parentheses}{(}{)}
\DeclarePairedDelimiter{\set}{\{}{\}}
\newcommand{\variety}[1]{\mathcal{V}(#1)}
\newcommand{\SigSubset}[1]{\sigma_{g,#1}}
\newcommand{\SigPrimeSubset}[1]{\sigma_{g',#1}}
\newcommand{\SigIneq}[1]{\sigma^{I}_{g,#1}}
\newcommand{\SigAll}[1]{\sigma^{A}_{g,#1}}
\title{An Optimization-Based \\
  Sum-of-Squares Approach \\
  to Vizing's Conjecture}
\author{Elisabeth Gaar, Daniel Krenn,\\ Susan Margulies and Angelika Wiegele}
\date{}
\begin{document}

\maketitle

\bgroup
\let\thefootnote\relax\footnotetext{%
  \hspace*{0.5em}This paper is published under a
  Creative Commons Attribution 4.0 International License.
  \raisebox{-0.5ex}{\includegraphics[height=1em]{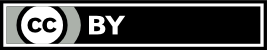}}
  (\url{http://creativecommons.org/licenses/by/4.0/})
  Copyright held by the authors.}
\egroup

\begin{abstract}
  Vizing's conjecture (open since 1968) relates the sizes of dominating
  sets in two graphs to the size of a dominating set in their Cartesian product
  graph. In this paper, we formulate Vizing's conjecture
  itself as a Positivstellensatz existence question. In particular, we
  encode the conjecture as an ideal/polynomial pair such that the
  polynomial is nonnegative if and only if the conjecture is true. We
  demonstrate how to use semidefinite optimization techniques to computationally
  obtain numeric sum-of-squares certificates, and then show how to
  transform these numeric certificates into symbolic certificates
  approving nonnegativity of our polynomial.

  After outlining the theoretical structure of this
  computer-based proof of Vizing's conjecture,
  we present computational and theoretical results.
  In particular,
  we present exact low-degree sparse sum-of-squares certificates for
  particular families of graphs.
\end{abstract}

\section{Introduction}

Sum-of-squares and its relationship to semidefinite
programming is a cutting-edge tool at the forefront of polynomial
optimization \cite{sos-instructions}. Activity in this area has
exploded over the past two decades to span areas as diverse as real
and convex algebraic geometry \cite{sos_geom}, control theory
\cite{sos_control}, proof complexity \cite{sos_proof}, theoretical
computer science \cite{sos_approx} and even quantum computation
\cite{sos_quantum}.  Systems of polynomial equations and other
non-linear models are similarly widely known for their compact and
elegant representations of combinatorial problems. Prior work on
polynomial encodings includes colorings \cite{alontarsi,hillarwindfeldt},
stable sets
\cite{susan_cpc,lovasz1}, matchings \cite{fischer},
and flows \cite{Onn}. In this project, we combine the
modeling strength of systems of polynomial equations with the
computational power of semidefinite programming and devise an
optimization-based framework for a computational proof of an old, open
problem in graph theory, namely Vizing's conjecture.

Vizing's conjecture was first proposed in 1968, and relates the sizes
of minimum dominating sets in graphs $G$ and $H$ to the size of a
minimum dominating set in the Cartesian product graph $G \Box
H$; a precise formulation follows as Conjecture~\ref{conjecture:vizing}.
Prior algebraic work on this conjecture \cite{susan_viz1}
expressed the problem as the union of a certain set of varieties and
thus the intersection of a certain set of ideals. However, algebraic
computational results have remained largely untouched. In this
project, we present an algebraic model of Vizing's conjecture that
equates the validity of the conjecture to the existence of a
Positivstellensatz, or a sum-of-squares certificate of nonnegativity
modulo a carefully constructed ideal.

By exploiting the relationship between the Positivstellensatz and
semidefinite programming, we are able to produce sum-of-squares
certificates for certain classes of graphs where Vizing's conjecture
holds. Thus, not only are we demonstrating an optimization-based
approach towards a computational proof of Vizing's conjecture, but we
are presenting actual minimum degree nonnegativity certificates that
are algebraic proofs of instances of this combinatorial problem.
Although the underlying graphs do not further what is known
about Vizing's conjecture at this time (indeed the
combinatorics of the underlying graphs is fairly trivial), the
construction of these ``combinatorial'' Positivstellens\"atze is an
elegant combination of computation, guesswork and computer algebra that is successfully executed for the first time here.

Our paper is structured as follows. In Section~\ref{sec_back},
we present the necessary background and definitions from graph
theory and commutative algebra. In Section~\ref{sec_poly_I}, we
begin the heart of the paper: we describe the ideal/polynomial pair
that models Vizing's conjecture as a sum-of-squares problem. In
Section~\ref{sec:methodology} we describe our precise process for finding the
sum-of-squares certificates, and in Section~\ref{sec_comp_res} we present our
computational results and the Positivstellens\"atze, i.e., the theorems that
arise. Finally, in Section~\ref{sec_conc}, we summarize our project and
present comments about future work.

%%%%%%%%%%%%%%%%%%%%%%%%%%%%%%%%%%%%%%
\section{Backgrounds and Definitions}\label{sec_back} In this section, we recall all necessary definitions from both graph theory, and polynomial ideals and commutative algebra.

%%%%%%%%%%%%%%%%%%%%%%%
\subsection{Definitions from Graph Theory}

Given a graph $G$ with vertex set~$V(G)$,
a set $D \subseteq V(G)$ is a \emph{dominating set in $G$}
if for each $v \in V(G)\setminus D$, there is a $u \in D$ such that $v$ is
adjacent to $u$ in $G$. The \emph{domination number of $G$}, denoted by
$\gamma(G)$, is the size of a minimum\footnote{Any proper subset of a
  \emph{minimum} dominating set in a graph~$G$ is not a
  dominating set in~$G$.}
dominating set in $G$. The
decision problem of determining whether a given graph has a dominating
set of size $k$ is NP-complete \cite{garey_and_johnson}.

Given graphs $G$ and $H$ with edge sets~$E(G)$ and $E(H)$ respectively,
the Cartesian product graph $G \Box H$ has vertex set
$V(G) \times V(H)$ and edge set
\begin{align*}
  E(G \Box H) = \big\{(gh, g'h') \,:~&\text{$g = g'$ and $(h,h') \in E(H)$, or}\\
  &\text{$h = h'$ and $(g,g') \in E(G)$}\big\}~,
\end{align*}
where $g$, $g' \in V(G)$ and $h$, $h' \in V(H)$.

In 1968, V. Vizing conjectured the following beautiful relationship
between domination numbers and Cartesian product graphs:
\begin{conjecture}[Vizing \cite{vizing}, 1968]\label{conjecture:vizing}
Given graphs $G$ and $H$, then the inequality
\begin{equation*}
  \gamma(G)\,\gamma(H) \leq \gamma(G \Box H)
\end{equation*}
holds.
\end{conjecture}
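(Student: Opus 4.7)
The plan is to recast this combinatorial inequality as a question about nonnegativity of a polynomial modulo an ideal, and then to attack that nonnegativity question via sum-of-squares certificates obtained from semidefinite programming. Concretely, for each pair $(G,H)$ and each candidate triple of values $(a,b,c)$ with $ab > c$, I want to build an ideal $I = I(G,H,a,b,c) \subseteq \K[x_1,\dots,x_N]$ whose variety is nonempty in $\Kbar^N$ exactly when there exist dominating sets $D_G \subseteq V(G)$ of size~$a$, $D_H \subseteq V(H)$ of size~$b$ and a dominating set $D \subseteq V(G \Box H)$ of size at most~$c$. Vizing's inequality then amounts to the statement that this variety is empty for all such $(a,b,c)$, or, equivalently by the (real or complex) Nullstellensatz, that the constant~$1$ lies in (a real form of) $I$. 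Encoding dominating sets via $0/1$ indicator variables together with the domination constraints (``some vertex in the closed neighborhood is chosen'') gives the defining equations for $I$.

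First I would set up the variables: a Boolean $x_v$ for each $v\in V(G)$, a Boolean $y_w$ for each $w\in V(H)$, and a Boolean $z_{v,w}$ for each $(v,w)\in V(G\Box H)$, together with idempotency relations $x_v^2 - x_v$, $y_w^2 - y_w$, $z_{v,w}^2 - z_{v,w}$. Next I would impose the dominating-set constraints for $G$, $H$ and $G\Box H$ as polynomial equalities, and encode the size constraints $\sum_v x_v = a$, $\sum_w y_w = b$, $\sum_{(v,w)} z_{v,w} = c$. Rather than work with the inequality directly, I would parametrize over the relevant $(a,b,c)$ with $ab > c$ and, following the Positivstellensatz, seek to write
\begin{equation*}
  1 \;=\; \sigma_0 \;+\; \sum_j \sigma_j\, g_j \pmod{I},
\end{equation*}
where the $g_j$ capture any inequality constraints and each $\sigma_j$ is a sum of squares of polynomials. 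Existence of such an identity (for every admissible $(a,b,c)$) is equivalent to Vizing's conjecture, and the paper then reformulates the search for $\sigma_0,\sigma_j$ of bounded degree as a semidefinite program.

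The computational step is then to solve, for concrete small graph families, the degree-$d$ SDP relaxation to obtain numeric positive semidefinite Gram matrices representing the $\sigma_j$. From these I would extract rational approximations, round to exact numbers, and verify algebraically that the rounded identity really reduces to $1$ modulo $I$. Producing a human-readable certificate for an infinite family (for example, stars, paths or other classes where $\gamma$ is understood) requires guessing a pattern in the numerical data and proving by induction, or via a symbolic ansatz, that the SOS identity scales uniformly with the parameters of the family.

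The hard part is, of course, the conjecture itself: since it has been open since 1968, I do not expect the plan above to close out the general case. The main obstacle is twofold. First, even for small graphs, the SDPs have a number of variables growing very quickly with $|V(G)| + |V(H)|$ and with the degree $d$, so purely computational evidence plateaus fast; sparsity, symmetry reduction, and clever choices of monomial bases will be essential to push the degree up. Second, and more fundamentally, turning family-by-family certificates into a single uniform Positivstellensatz that works for \emph{all} pairs $(G,H)$ requires an algebraic structure in the certificates that has not yet been identified; isolating and then proving such a structure is where the genuine mathematical difficulty of Vizing's conjecture reappears inside the SOS framework.
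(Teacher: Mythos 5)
You are honest that this is a methodology sketch rather than a proof: Vizing's conjecture is open, and the paper does not resolve it either. Theorem~\ref{proposition:VizSDPConj} and Corollary~\ref{proposition:VizSDPlsos} are \emph{equivalences} between the conjecture and a sum-of-squares existence question, not a resolution, and the paper's certificates (Theorems~\ref{thm:sosCertificaten_kGeqnG_kHeqnHminus1_easy}, \ref{thm:sosCertificaten_kGeqnG_kHeqnHminus2_easy}) cover only particular parameter families. So the right comparison is between your reformulation and the paper's, and here the routes genuinely diverge.

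The most important difference is in what the ideal's variety parameterizes. You fix concrete graphs $G$ and $H$ and introduce $0/1$ indicator variables $x_v$, $y_w$, $z_{v,w}$ for dominating sets; each SDP or Nullstellensatz instance is tied to one specific pair $(G,H)$. The paper instead introduces \emph{edge} variables $e_{gg'}$, $e_{hh'}$ and fixes the dominating sets $D_\cG$, $D_\cH$ once and for all (Definition~\ref{def:IG}), so that a point of $\variety{\Isos}$ simultaneously selects a graph $G$ on $n_\cG$ vertices with $\gamma(G)=k_\cG$, a graph $H$ on $n_\cH$ vertices with $\gamma(H)=k_\cH$, and a dominating set of $G \Box H$. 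One certificate for the tuple $(n_\cG,k_\cG,n_\cH,k_\cH)$ therefore covers \emph{every} pair of graphs with those parameters. This is precisely the device that lets the paper prove theorems for whole parameter classes at once, and it is absent from your encoding; without it, the search collapses into infinitely many unrelated SDPs, one per graph pair, with no shared structure to exploit. A second, related difference: you bound the size of the product dominating set by $\sum_{(v,w)} z_{v,w} = c$ with $c < ab$ and ask whether the variety of counter-examples is empty (a Nullstellensatz ``$1 \in I$'' question, parameterized over $(a,b,c)$). The paper leaves the product dominating set unconstrained in size, defines the single linear polynomial $f^* = \bigl(\sum_{gh} x_{gh}\bigr) - k_\cG k_\cH$, and instead certifies nonnegativity of $f^*$ over the (nonempty) variety $\variety{\Isos}$; since $\Isos$ is radical with finite real variety, Lemma~\ref{lem:NonNegVarEquivEllSos} makes this exactly equivalent to an $\ell$-sos representation. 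This avoids the extra parameter $c$ and feeds directly into the SDP pipeline as a Gram-matrix search for $f^* \equiv v^T X v \pmod{\Isos}$.

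Your closing assessment of the obstruction --- that extracting a uniform algebraic pattern across all parameters is where the genuine difficulty of Vizing's conjecture reappears --- is correct, and it is exactly the gap the paper also leaves open (see Conjecture~\ref{con:sosCertificaten_kGeqnG_kHeqnHminusi_easy}).
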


\tikzset{
  vertex/.style = {circle, fill=gray!20, draw=black, minimum size=8pt, inner sep=0pt},
  edge/.style = {very thick},
  dominating/.style = {fill=gray},
}
\begin{example}\label{ex_prod_graph}
  In this example, we demonstrate the Cartesian product graph of two
  $C_4$ cycle graphs:
    
\begin{center}
%\begin{figure}
%\centering
  \begin{tikzpicture}[scale=0.7]

    \foreach \y/\sty in {0/, 1/dominating, 2/, 3/dominating}
    \node[vertex, \sty] (G-\y) at (-1.5, \y) {};
    
    \foreach \x/\sty in {0/dominating, 1/, 2/dominating, 3/}
    \node[vertex, \sty] (H-\x) at (\x, 4.5) {};
    
    \foreach \x/\y/\sty in {0/0/, 1/0/, 2/0/dominating, 3/0/,
                            0/1/dominating, 1/1/, 2/1/, 3/1/,
                            0/2/, 1/2/, 2/2/dominating, 3/2/,
                            0/3/dominating, 1/3/, 2/3/, 3/3/}
    \node[vertex, \sty] (GH-\x-\y) at (\x, \y) {};

    \foreach \xa/\xb in {0/1, 1/2, 2/3}
    {
      \foreach \y in {0, 1, 2, 3}
      {
        \draw [edge] (GH-\xa-\y) -- (GH-\xb-\y);
        \draw [edge] (GH-3-\y) to[out=155, in=25] (GH-0-\y);
      }
      \draw [edge] (H-\xa) -- (H-\xb);
    }
    \draw [edge] (H-3) to[out=155, in=25] (H-0);

    \foreach \ya/\yb in {0/1, 1/2, 2/3}
    {
      \foreach \x in {0, 1, 2, 3}
      {
        \draw [edge] (GH-\x-\ya) -- (GH-\x-\yb);
        \draw [edge] (GH-\x-3) to[out=-115, in=115] (GH-\x-0);
      }
      \draw [edge] (G-\ya) -- (G-\yb);
    }
    \draw [edge] (G-3) to[out=-115, in=115] (G-0);

    \node at (-1.5, 3.75) {$G=C_4$};
    \node at (-1.25, 4.5) {$H=C_4$};
    \node at (1.5, -0.75) {$G \Box H$};
  \end{tikzpicture}
\end{center}

%  \caption[A Cartesian product of graphs]{A Cartesian product of graphs.
%    Vertices~\tikz{\node[vertex, dominating] {}}
%    highlight a dominating set in each graph.}
%  \label{fig:box-c4-c4}
%\end{figure}
In these graphs,~\tikz{\node[vertex, dominating] {};} represents a
vertex in a dominating set, and Vizing's conjecture holds with equality:
$\gamma(G)\,\gamma(H) = 2\cdot 2 = 4 = \gamma(G \Box H)$. However,
observe that some copies of $G$ in $G\Box H$ \emph{do not contain any
  vertices} of the dominating set, i.e., they are dominated entirely
by vertices in other ``layers'' of the graph. This example highlights
the difficulty of Vizing's conjecture.
\hfill $\Box$
\end{example}

\subsection{Historical Notes}
Vizing's conjecture is an active area of research spanning over fifty
years. Early results have focused on proving the conjecture for
certain classes of graphs. For example, in 1979, Barcalkin and
German \cite{viz_bar_ger} proved that Vizing's conjecture holds for
graphs satisfying a certain ``partitioning condition'' on the vertex
set. The idea of a ``partitioning condition'' inspired work for the
next several decades, as Vizing's conjecture was shown to hold on
paths, trees, cycles, chordal graphs, graphs satisfying certain
coloring properties, and graphs with $\gamma(G) \leq 2$. These results
are clearly outlined in the 1998 survey paper by Hartnell and Rall
\cite{viz_dom_bk}. In 2000, Clark and Suen \cite{viz_clark_suen}
showed that $\gamma(G)\,\gamma(H) \leq 2\, \gamma(G \Box H)$, and in 2004,
Sun~\cite{viz_sun} showed that Vizing's conjecture holds on graphs
with $\gamma(G)\leq 3$. Finally, the 2009 survey paper~\cite{viz_survey_2009}
summarizes the work from 1968 to 2008, contains new results,
new proofs of existing results, and comments about minimal
counter-examples.

%%%%%%%%%%%%%%%%%%%%%%%
\subsection{Definitions around Polynomial Ideals}
Our goal is to model Vizing's conjecture as a semidefinite programming
problem. In particular, we will create an ideal/polynomial pair such
that the polynomial is nonnegative over a given variety if
and only if Vizing's conjecture is true. 
%Towards that end, we need
%some background and definitions around polynomial ideals.

In this subsection, we present a brief introduction to polynomial
ideals, and the relationship between nonnegativity and
sum-of-squares. This material is necessary for understanding our
polynomial ideal model of Vizing's conjecture. For a more thorough
introduction to this material see \cite{coxetal} and~\cite{sos-instructions}.

Throughout this section,
let $I$ be an ideal in a polynomial ring~$P=\K[x_1,\ldots,x_n]$
with a field $\K \subseteq \R$.
The \emph{variety of the ideal~$I$} is defined as
the set
\begin{equation*}
  \variety{I} = \{z\in\Kbar^n~:~\text{$f(z)=0$~for all $f \in I$}\}
\end{equation*}
with $\Kbar$ being the algebraic closure of~$\K$.
The variety $\variety{I}$ is called \emph{real}
if $\variety{I} \subseteq \R^n$.

We
say that the ideal~$I$ is \emph{radical}
if whenever $f^m \in I$ for some polynomial~$f\in P$ and integer
$m \geq 1$, then $f \in I$. The
\emph{radical of $I$}, denoted $\sqrt{I}$, is the set
\begin{equation*}
  \sqrt{I} = \{f\in P~:~f^m \in I \text{~for some integer $m \geq 1$}\}.
\end{equation*}
It is easy to
see that an ideal $I$ is radical if and only if $I = \sqrt{I}$.

\begin{lemma} (\cite[Section 3.7.B, pg. 246]{kreuzer})\label{lem_kreuzer_rad}
  Given an ideal $I$ with finite variety~$\variety{I}$,
  if $I$ contains a univariate
  square-free polynomial in each variable, then $I$ is radical.
\end{lemma}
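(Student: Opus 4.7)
The plan is to prove the lemma by induction on the number of variables $n$, following the classical approach due to Seidenberg. For the base case $n=1$, the polynomial ring $\K[x_1]$ is a principal ideal domain, so $I = (h)$ for some generator $h$. Because the square-free polynomial $g_1 \in I$ is a multiple of $h$, the generator $h$ is itself square-free, and a principal ideal generated by a square-free polynomial in a PID is radical.

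For the inductive step, I would fix the square-free univariate polynomial $g_n(x_n) \in I$. Since $\K \subseteq \R$ has characteristic zero, $\K$ is perfect, and $g_n$ factors into pairwise distinct irreducibles $g_n = q_1 \cdots q_r$ over $\K$. Define $I_j := I + (q_j)$ for $j = 1, \ldots, r$. The first key step is to establish the decomposition
\begin{equation*}
  I = \bigcap_{j=1}^r I_j.
\end{equation*}
The containment $\subseteq$ is immediate; for the reverse, I would pass to the quotient $R = \K[x_1,\ldots,x_n]/I$, observe that the images $\bar q_j$ of the $q_j$ are pairwise coprime (by Bezout in $\K[x_n]$) and satisfy $\bar q_1 \cdots \bar q_r = \bar g_n = 0$, and then invoke the Chinese Remainder Theorem to identify $R$ with $\prod_j R/(\bar q_j) = \prod_j \K[x_1,\ldots,x_n]/I_j$.

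The second step is to show each $I_j$ is itself radical. Setting $\K_j := \K[x_n]/(q_j)$, which is a field since $q_j$ is irreducible, I would identify $\K[x_1,\ldots,x_n]/I_j$ with $\K_j[x_1,\ldots,x_{n-1}]/\tilde I_j$ for a corresponding ideal $\tilde I_j$. The square-free univariate polynomials $g_1, \ldots, g_{n-1}$ lie in $\tilde I_j$, and because $\K$ is perfect the extension $\K_j/\K$ is separable, so these polynomials remain square-free over $\K_j$. The variety of $\tilde I_j$ is still finite (it is a subset of $\variety{I}$ after suitable identification), so the inductive hypothesis in $n-1$ variables applies to $\tilde I_j$, giving that $\tilde I_j$ — and hence $I_j$ — is radical. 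An arbitrary intersection of radical ideals is radical, so $I = \bigcap_j I_j$ is radical. The main obstacle I expect is proving the Chinese-remainder-type decomposition $I = \bigcap_j I_j$ cleanly; the verification that square-freeness is preserved under passage to $\K_j$ is secondary and rests precisely on the perfection of $\K$.
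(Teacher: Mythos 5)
The paper does not prove this lemma; it states it with a citation to Kreuzer and Robbiano, where it is Seidenberg's lemma (there phrased with the hypothesis $\gcd(g_i,g_i')=1$ on the univariate polynomials, which for $\K\subseteq\R$ is equivalent to square-freeness). So there is no in-paper proof to compare against. Your argument is the classical proof of Seidenberg's lemma, and its ingredients are all correct: the PID base case, the comaximality of the distinct irreducible factors of $g_n$, the Chinese Remainder decomposition $R\cong\prod_j R/(\bar q_j)$ yielding $I=\bigcap_j I_j$, the isomorphism $\K[x_1,\dots,x_n]/I_j\cong\K_j[x_1,\dots,x_{n-1}]/\tilde I_j$ with $\K_j=\K[x_n]/(q_j)$, the finiteness of $\variety{\tilde I_j}$, and the fact that an intersection of radical ideals is radical.

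The one place that needs tightening is the scope of the inductive hypothesis. As you apply it, the base field changes from $\K$ to $\K_j$, and $\K_j$ need not embed in $\R$ (take $q_j=x_n^2+1$). So you cannot induct on the lemma as stated for a field $\K\subseteq\R$. The clean fix is to prove, by induction on $n$, the version over an arbitrary field $\mathbb{F}$ in which the hypothesis on each $g_i$ is $\gcd(g_i,g_i')=1$; this condition is stable under field extension (the Euclidean algorithm only sees the coefficients), so it passes from $\K$ to $\K_j$ with no separability argument, and for $\K\subseteq\R$ square-freeness recovers it since $\mathrm{char}\,\K=0$. Alternatively, carry out the induction over the class of perfect fields, which is closed under algebraic extension, exactly as your perfection remarks suggest; but either way, the statement you induct on must be over a family of fields closed under the passage $\K\rightsquigarrow\K_j$, not over the single field $\K\subseteq\R$ fixed in the lemma.
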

In this case, \emph{square-free} implies that when a polynomial is
decomposed into its unique factorization, there are no repeated
factors.
\begin{comment}% ISSAC submission
For example, $(x^2 + y)(x^4 + 2z + 3)$ is square-free,
but~$(x^2 + y)(x^4 + 2z + 3)^3$ is not.
\end{comment}

In particular, Lemma
\ref{lem_kreuzer_rad} implies that ideals containing
$x_i^2 - x_i = x_i(x_i - 1)$ in each variable (i.e., the boolean
ideals) are radical.

We continue with our background by recalling the
necessary notation for sum-of-squares.
\begin{definition}\label{definition:ellsos}
  %Let $P$ be a polynomial ring, $I$ an ideal in $P$, and
  Let $\ell$ be a nonnegative integer. A polynomial
  $f \in P$ is called \emph{$\ell$-sum-of-squares modulo $I$} (or 
  \emph{$\ell$-sos mod $I$}), if there exist polynomials $s_{1}$, \dots, $s_{d} \in P$ with degrees
  $\deg{s_{i}} \leq \ell$ for all $i \in \set{1, \dots,d}$ and
  \begin{equation*}
    f \equiv \sum_{i=1}^{d} s_{i}^{2} \mod I.
  \end{equation*}
\end{definition}
Algebraic identities like $f = \sum_{i=1}^{d} s_{i}^{2} + g$, $g\in I$,
are often referred to as
\emph{Positivstellensatz certificates of nonnegativity}, and these
identities can be found via semidefinite programming, which is at the
heart of this project.
\begin{comment}% ISSAC submission
\begin{example}\label{ex_sos}
  Let
  $I = \big\langle x^2 -x, y^2 - y, z^2 - z, w^2 - w, r^2 - r, s^2 -
  s, r, 1 - s, (1 - x) (1-sy)(1-rz), (1 - y) (1-sx)(1-rw), (1 - z)
  (1-sw) (1-rx), (1 - w) (1-sz)(1-ry)\big\rangle$. In this case,
  \begin{align*}
    x + y + z - 2  &\equiv (x + y - 1)^2 +  (z + w - 1)^2 \mod I
  \end{align*}
  and the polynomial $x + y + z - 2$ is said to be $1$-sos mod
  $I$. For space considerations, we will not display the certificate
  in its entirety here, but we will describe precisely how this
  certificate, and others like it, are obtained in
  Section~\ref{sec:methodology}.
  \hfill $\Box$
\end{example}
\end{comment}
It is well-known that not all nonnegative polynomials can be
expressed as a sum-of-squares. However, in the particular case when the
ideal is radical and the variety is finite, we can state the following.
\begin{lemma}\label{lem:NonNegVarEquivEllSos} 
  Given a radical ideal $I$ with a finite real variety and a
  polynomial~$f$ with $f(\variety{I}) \subseteq \R$.
  Then $f$ is nonnegative on the variety, i.e,
  $\forall z \in \variety{I} \colon f(z) \geq 0$, if and only if
  there exists a nonnegative integer $\ell$ such that $f$ is $\ell$-sos
  modulo~$I$.
\end{lemma}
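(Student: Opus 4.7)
The plan is to prove the two implications separately. The forward direction is essentially definitional: if $f \equiv \sum_{i=1}^d s_i^2 \mod I$, i.e., $f - \sum_i s_i^2 \in I$, then for any $z \in \variety{I}$ every polynomial in $I$ vanishes at $z$, so $f(z) = \sum_{i=1}^d s_i(z)^2$. Since $\variety{I} \subseteq \R^n$ by assumption and the $s_i$ have coefficients in $\K \subseteq \R$, each $s_i(z)$ is a real number, and so $f(z) \geq 0$.

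For the backward direction, the strategy is to use finiteness of the variety to construct a single polynomial $s$ with $s^2 \equiv f \mod I$ by Lagrange interpolation. Writing $\variety{I} = \{z_1, \ldots, z_m\}$, I would first build interpolation polynomials $\delta_1, \ldots, \delta_m$ in $\R[x_1, \ldots, x_n]$ satisfying $\delta_i(z_j) = 1$ if $i = j$ and $0$ otherwise; these exist for any finite subset of $\R^n$ since the points can be separated coordinate-by-coordinate by affine forms. Using the hypothesis $f(z_i) \geq 0$, the polynomial
\[
  s \;:=\; \sum_{i=1}^{m} \sqrt{f(z_i)}\, \delta_i
\]
is well-defined and satisfies $s(z_j)^2 = f(z_j)$ for all $j$. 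Hence $s^2 - f$ vanishes on the entire variety. Because $I$ is radical, the Nullstellensatz applied to the finite (and real) variety $\variety{I}$ gives $s^2 - f \in I$, so $f \equiv s^2 \mod I$ and $f$ is $\ell$-sos modulo $I$ with $\ell = \deg s$.

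The main obstacle is bookkeeping around the field of coefficients. The construction naturally produces $s \in \R[x_1, \ldots, x_n]$ rather than in $P = \K[x_1, \ldots, x_n]$, both because the interpolators $\delta_i$ involve the (possibly irrational) coordinates of the $z_i$ and because $\sqrt{f(z_i)}$ need not lie in $\K$. One then has to argue that $s^2 - f$ actually lies in $I$ rather than merely in its extension $I \cdot \R[x_1, \ldots, x_n]$; this follows from the fact that $I$ is radical with variety equal to $\variety{I}$ viewed as a subset of $\R^n \subseteq \overline{\K}^n$, so the vanishing ideals over $\R$ and over $\overline{\K}$ agree with $I$ up to the base-change. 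In the intended reading of the lemma (consistent with the SDP-based methodology of the paper), this subtlety is absorbed by allowing the $s_i$ to have real coefficients, which is exactly what is produced in practice by semidefinite programming.
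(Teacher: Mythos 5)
Your proposal is correct and follows essentially the same approach as the paper: the forward direction is the definitional observation that ideal polynomials vanish on the variety, and the backward direction uses Lagrange interpolation on the finite variety together with radicality (so that vanishing on $\variety{I}$ implies membership in $I$). The only cosmetic difference is that you package the certificate as a single square $s^2$ with $s = \sum_i \sqrt{f(z_i)}\,\delta_i$, while the paper uses the sum of $t$ squares $\sum_i \bigl(\sqrt{f(z_i)}\,f_i\bigr)^2$; these agree modulo $I$ since the cross terms $\delta_i\delta_j$ vanish on the variety, and you are right that both versions share the field-of-coefficients subtlety (the interpolators and the $\sqrt{f(z_i)}$ need not lie in $\K$), which the paper silently glosses over but you flag explicitly.
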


\begin{proof}
  Let $f$ be a polynomial that can be expressed as a
  sum-of-squares modulo~$I$. Since all polynomials in the ideal~$I$ vanish
  on the variety by definition and since $\sum_{i=1}^{d} s_{i}^{2}$ is
  clearly positive, $f$ is nonnegative on $\variety{I}$. To
  prove the other direction, we recall a well-known argument included
  here for completeness. Suppose we have a polynomial $f(z) \geq 0$
  for all $z \in \variety{I}$. Suppose further that $\{z_1,\ldots,z_t\}$
  are all points in $\variety{I}$ (recall that the
  variety is finite). We now construct $t$ interpolation polynomials
  (see~\cite{interp})
  such that
    \begin{align*}
    f_i(z) &= \begin{cases}
    1 & z= z_i,\\
    0 & z \neq z_i
    \end{cases}
    \end{align*}
    for all $z \in \variety{I}$.
    Observe that the square of an interpolating polynomial is again an
    interpolating polynomial. Since the ideal is radical, this means that
    $I = I\bigl(\variety{I}\bigr)$ where $I\bigl(\variety{I}\bigr)$
    is the ideal vanishing on $\variety{I}$. In this case, we see
    that the difference polynomial
    \begin{align*}
    \bigg( f(z) - \sum_{i = 1}^t f_i^2(z)f(z_i) \bigg) \in I
    \end{align*}
    since this difference polynomial vanishes on every point in the
    variety. Therefore, if we let $s_i = \sqrt{f(z_i)}  f_i(z)$,
    we then see that
    \begin{equation*}
      f \equiv \sum_{i=1}^d s_i^2 \mod~I.
      \qedhere
    \end{equation*}
\end{proof}

    We observe that the $\ell$ in this case is quite large, since it is
    the degree of the interpolating polynomial $f_i$, which depends on the
    number of points in the variety. However, we will rely on the fact that
    the sum-of-squares representation is not unique, and there may
    exist Positivstellensatz certificates of much lower degree, within
    reach of computation. As we will see in Section
    \ref{sec_comp_res}, this does indeed turn out to be the case.

%%%%%%%%%%%%%%%%%%%%%%%%%%%%%%%%%%%%%%
  \section{Vizing's Conjecture as a Sum-of-Squares Problem} \label{sec_poly_I}
  In this section, we describe Vizing's conjecture as a sum-of-squares
  problem. Towards that end, we will first define ideals associated
  with graphs $G$, $H$ and $G \Box H$, and then finally describe an
  ideal/\allowbreak polynomial pair where the polynomial is nonnegative on the
  variety of the ideal if and only if Vizing's conjecture is true. We begin by
  creating an ideal where the variety of solutions
  corresponds to graphs with a given number of vertices and size of a
  minimum dominating set.

  The notation underlying all of the definitions in this section is as
  follows. 
  Let $n_\cG$ and $k_\cG \leq n_\cG$ be fixed positive integers, and 
  let $\cG$ be the class of graphs on $n_\cG$ vertices with a
  minimum dominating set $D_\cG$ (fixed) of size $k_{\cG}$.
  We then turn the various edges ``on'' or ``off''
  (by controlling a boolean variable~$e_{gg'}$) such that each point in the
  variety corresponds to a \emph{specific} graph $G \in \cG$.

  \begin{definition}\label{def:IG}
    Set $e_\cG = \{e_{gg'} : \set{g,g'} \subseteq V(\cG)\}$.
  The ideal $I_\cG \subseteq P_\cG = \K[e_\cG]$
  is defined by the system of polynomial equations
  \begin{subequations}
    \label{eq:graphs-G-fix}
    \begin{align}
      e_{gg'}^2 - e_{gg'} &= 0 &
      &\text{for $\set{g,g'}\subseteq V(\cG)$,} \label{eq:eij-fix}\\
      \prod_{g' \in D_\cG} (1 - e_{gg'}) &= 0 &
      &\text{for $g \in V(\cG) \setminus D_\cG$,} \label{eq:domset-fix}\\
      \prod_{g' \in V(\cG) \setminus S} \parentheses[\bigg]{\;\sum_{ g \in S}e_{gg'}} &= 0 &
      &\text{for $S \subseteq V(\cG)$ where $|S| = k_\cG-1$.}\label{eq:kcover-fix}
    \end{align}
  \end{subequations}
\end{definition}
\begin{theorem}
  The points in the variety $\variety{I_\cG}$ are in bijection to
  the graphs in $\cG$.
\end{theorem}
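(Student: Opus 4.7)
The plan is to exhibit the bijection explicitly. Given a point $(e_{gg'})_{\{g,g'\}\subseteq V(\cG)} \in \variety{I_\cG}$, define a graph $G$ on vertex set $V(\cG)$ whose edge set consists of those pairs $\{g,g'\}$ with $e_{gg'} = 1$. Conversely, from any graph $G$ on $V(\cG)$ assign $e_{gg'} = 1$ if $\{g,g'\} \in E(G)$ and $e_{gg'} = 0$ otherwise. I would then argue that this map is well-defined into $\cG$ in one direction, well-defined into $\variety{I_\cG}$ in the other, and that the two constructions are mutually inverse.

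The first step is to exploit equation~\eqref{eq:eij-fix}. Since $e_{gg'}^2 - e_{gg'} = e_{gg'}(e_{gg'}-1)$ has only $0$ and $1$ as roots (even over $\Kbar$), every point of $\variety{I_\cG}$ lies in $\{0,1\}^{e_\cG}$, so the assignment above really does describe a simple graph on $V(\cG)$. Injectivity of the map from variety points to graphs is then immediate because distinct $0/1$ assignments encode distinct edge sets. The remaining content is to check that equations~\eqref{eq:domset-fix} and~\eqref{eq:kcover-fix} carve out exactly those $0/1$ assignments whose corresponding graph belongs to $\cG$, i.e., has $D_\cG$ as a \emph{minimum} dominating set of size $k_\cG$.

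For~\eqref{eq:domset-fix}, I would note that for $g \in V(\cG)\setminus D_\cG$, the product $\prod_{g'\in D_\cG}(1 - e_{gg'})$ vanishes iff there is some $g'\in D_\cG$ with $e_{gg'} = 1$, i.e., some neighbor of $g$ in $D_\cG$. Requiring this for every $g \notin D_\cG$ is the statement that $D_\cG$ is a dominating set of $G$. For~\eqref{eq:kcover-fix}, fix $S\subseteq V(\cG)$ with $|S|=k_\cG - 1$. The product $\prod_{g'\in V(\cG)\setminus S}\bigl(\sum_{g\in S} e_{gg'}\bigr)$ vanishes iff there exists $g' \notin S$ with $\sum_{g\in S} e_{gg'} = 0$, i.e., $g'$ has no neighbor in $S$; equivalently, $S$ fails to dominate $G$. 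Requiring this for all $S$ of size $k_\cG-1$ says that no set of size less than $k_\cG$ is dominating, so $\gamma(G)\geq k_\cG$. Combined with the previous condition that $D_\cG$ (of size $k_\cG$) dominates, this gives $\gamma(G) = k_\cG$ and $D_\cG$ is a minimum dominating set, i.e., $G \in \cG$.

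Conversely, for any $G \in \cG$, the corresponding $0/1$ vector satisfies~\eqref{eq:eij-fix} trivially, satisfies~\eqref{eq:domset-fix} because $D_\cG$ dominates $G$, and satisfies~\eqref{eq:kcover-fix} because no set of size $k_\cG - 1$ dominates $G$ by minimality of $D_\cG$. Since the two constructions are visibly inverse to each other, the bijection is established. The argument is essentially bookkeeping once the roles of the three equation families are made clear; the only mild subtlety is ensuring that the variety is taken over $\Kbar$ rather than $\K$, which is handled immediately by~\eqref{eq:eij-fix}, so there is no real obstacle.
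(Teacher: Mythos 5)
Your proof is correct and follows the same approach as the paper: interpret~\eqref{eq:eij-fix} as forcing boolean edge variables (hence a graph on $V(\cG)$), \eqref{eq:domset-fix} as the statement that $D_\cG$ is a dominating set, and~\eqref{eq:kcover-fix} as the statement that no $(k_\cG-1)$-subset dominates. You are somewhat more careful than the paper in that you explicitly verify both directions of the correspondence, note injectivity, and observe that~\eqref{eq:eij-fix} already restricts the variety to $\{0,1\}$ entries even over $\Kbar$ --- all of which the paper leaves implicit.
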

\begin{proof}
  Consider any point $z \in \variety{I_\cG}$. Since
  Eqns.~\eqref{eq:eij-fix} turn the edges ``on'' ($e_{gg'} = 1$) or
  ``off'' ($e_{gg'} = 0$), the point $z$ defines a graph~$G$ in
  $n_{\cG}$ vertices. Eqns.~\eqref{eq:domset-fix} iterate over all the
  vertices \emph{inside} the set $D_\cG$, and ensure that for each vertex \emph{outside} the set at least one
  edge from a vertex \emph{inside} the set to this vertex  
  is ``on''. Therefore, $D_\cG$ is a dominating set. Finally,
  Eqns.~\eqref{eq:kcover-fix} iterate over all sets $S$ of size
  $k_\cG-1$ and ensure that at least one vertex \emph{outside}~$S$ is
  \textbf{not} incident on any vertex \emph{inside}~$S$ for any~$S$.
  Therefore, no $S$ of size $k_\cG - 1$ is a dominating
  set. Thus, every point $z \in \variety{I_{\cG}}$ corresponds
  to a graph $G$ on $n_\cG$ vertices with a minimum dominating set of
  size $k_\cG$.
\end{proof}

Similarly, for fixed positive integers $n_\cH$ and $k_\cH \leq n_\cH$, let
$\cH$ be the class of graphs on $n_\cH$
vertices and a minimum dominating set of size $k_\cH$. 
Again, we fix
the dominating set to some $D_\cH$ to
reduce isomorphisms within the variety.
Furthermore let 
the ideal $I_\cH$ be defined on the polynomial
ring~$P_\cH=\K[e_\cH]$ with $e_\cH = \{e_{hh'} : \set{h,h'} \subseteq V(\cH)\}$
such that the
solutions in the variety~$\variety{I_\cH}$ are in bijection to the graphs
in $\cH$.

Next, we define the graph class~$\cG \Box \cH$ and the ideal $I_{\cG \Box \cH}$.
For the above classes~$\cG$ and~$\cH$, the graph class~$\cG \Box \cH$ is
the set of product graphs $G \Box H$ for $G\in\cG$ and $H\in\cH$.
The new variables needed
for the ideal are the variables corresponding to the vertices in the product
graph. Let
$x_{\cG\Box\cH}=\set[\big]{x_{gh} : g \in V(\cG),\, h \in V(\cH)}$ and
set $P_{\cG\Box\cH} = \K[e_\cG \cup e_\cH \cup x_{\cG\Box\cH}]$.

\begin{definition}\label{def:IGH}
  The ideal
  $I_{\cG \Box \cH} \subseteq P_{\cG\Box\cH}$ is defined by the
  system of polynomial equations
  \begin{subequations}
    \label{eq:graphs-GH}
    \begin{align}
      x_{gh}^2 - x_{gh} &= 0, \label{eq:xgh}\\
      \bigl(1-x_{gh}\bigr)
      \parentheses[\bigg]{
        \prod_{\substack{g' \in V(\cG) \\ g' \neq g}}
        \parentheses[\big]{1 - e_{gg'} x_{g'h}}}
      \parentheses[\bigg]{
        \prod_{\substack{h' \in V(\cH) \\ h' \neq h}}
        \parentheses[\big]{1 - e_{hh'} x_{gh'}}} &= 0, \label{eq:xgh_domset}
    \end{align}
  \end{subequations}
 for $g \in V(\cG)$ and $h \in V(\cH)$.
\end{definition}

Observe that we have no restrictions on the edge variables in this
definition. It is only used as a stepping
stone to the final and most important ideal in our polynomial model.

\begin{definition}\label{def:Isos}
  For graph classes~$\cG$ and $\cH$, we set $\Isos$ to be the ideal
  generated by the elements of $I_\cG$, $I_\cH$ and $I_{\cG\Box\cH}$.
\end{definition}
Note that our definition of~$\Isos$ depends on the specific parameters~$n_\cG$, $n_\cH$, $k_\cG$ and $k_\cH$.

\begin{theorem}\label{def:IGHsdp}
  The points in
  the variety $\variety{\Isos}$ are in bijection to the triple of graphs
  whose components are in $\cG$, in $\cH$ and in their corresponding product graph
  with a dominating set of any size.
\end{theorem}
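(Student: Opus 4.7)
The plan is to project the ideal $\Isos$ onto its three natural coordinate blocks $e_\cG$, $e_\cH$, and $x_{\cG\Box\cH}$, and to show that each block of variables independently encodes one component of the desired triple; the bijection then follows by combining the three encodings.

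First, since $\Isos$ contains by definition all generators of $I_\cG$ and of $I_\cH$, any point $z \in \variety{\Isos}$ satisfies every defining equation of those two ideals. Applying the preceding theorem (once for $\cG$ and once for $\cH$), the $e_\cG$-coordinates of $z$ encode a unique graph $G \in \cG$ with fixed minimum dominating set $D_\cG$, and the $e_\cH$-coordinates encode a unique graph $H \in \cH$ with fixed minimum dominating set $D_\cH$.

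Next, I would analyze the equations of Definition~\ref{def:IGH}. Equation~\eqref{eq:xgh} forces $x_{gh} \in \set{0,1}$, so the $x$-coordinates define a subset $D \subseteq V(G \Box H)$ via the rule $(g,h) \in D \iff x_{gh} = 1$. The crux of the proof is the interpretation of~\eqref{eq:xgh_domset}: since every $e$- and $x$-variable is boolean, the product vanishes at $z$ if and only if at least one of its factors vanishes. The factor $1 - x_{gh}$ vanishes iff $(g,h) \in D$; a factor $1 - e_{gg'} x_{g'h}$ vanishes iff $\set{g,g'}$ is an edge of $G$ and $(g',h) \in D$; and a factor $1 - e_{hh'} x_{gh'}$ vanishes iff $\set{h,h'}$ is an edge of $H$ and $(g,h') \in D$. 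By the definition of $E(G \Box H)$, these three cases exhaust precisely the ways in which the vertex $(g,h)$ can be dominated in the product graph. Hence~\eqref{eq:xgh_domset} holds for every pair $(g,h)$ if and only if $D$ is a dominating set of $G \Box H$ of unrestricted size.

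For the converse direction, I would take an arbitrary triple $(G, H, D)$ with $G \in \cG$, $H \in \cH$, and $D$ a dominating set of $G \Box H$, set $e_{gg'} = 1$ iff $\set{g,g'} \in E(G)$, set $e_{hh'} = 1$ iff $\set{h,h'} \in E(H)$, and set $x_{gh} = 1$ iff $(g,h) \in D$, and then verify that the resulting point satisfies every generator of $\Isos$. The verifications for $I_\cG$ and $I_\cH$ follow from the preceding theorem, \eqref{eq:xgh} is immediate from the boolean assignment, and the check for~\eqref{eq:xgh_domset} is exactly the reverse of the case analysis above. The main obstacle is purely notational — keeping the three-way boolean case analysis for~\eqref{eq:xgh_domset} bookkeeping-clean — and no deeper difficulty should arise, since the edge variables and vertex variables interact only through the factors of that equation.
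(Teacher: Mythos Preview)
Your proposal is correct and follows essentially the same approach as the paper: reduce to the earlier bijection for $\variety{I_\cG}$ and $\variety{I_\cH}$, then read off the meaning of~\eqref{eq:xgh} and~\eqref{eq:xgh_domset} by a boolean case analysis on the factors. If anything, you are more careful than the paper, which only spells out the direction from a point in $\variety{\Isos}$ to a triple and leaves the converse implicit; your explicit construction of the point from a given triple $(G,H,D)$ fills that in.
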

\begin{proof}
  We have already demonstrated that
  $\variety{I_\cG}$, $\variety{I_\cH}$ are in bijection to the
  graphs in $n_\cG$, $n_\cH$ vertices with minimum dominating sets of
  size $k_\cG$, $k_\cH$ respectively. It remains to investigate the
  restrictions placed on the $x_{gh}$ variables, which denote whether
  or not the vertex $gh \in V(\cG \Box \cH)$ appears in the dominating set of the product
  graph. Eqns.~\eqref{eq:xgh} force the vertex variables
  $x_{gh}$ to be ``on'' or ``off'', i.e., the vertex is in the
  dominating set if $x_{gh} = 1$ and is outside the dominating set
  otherwise. Eqns.~\eqref{eq:xgh_domset} force every vertex $gh$ to be
  dominated. It is either in the set itself (i.e., $1 - x_{gh} = 0$),
  or it is adjacent to a vertex in the dominating set via an edge from
  the underlying graph in $\cG$ or the underlying graph in $\cH$. In
  particular, the edge $e_{gg'}$ is ``on'' and the vertex $x_{g'h}$ is
  in the dominating set, or the $e_{hh'}$ is ``on'' and the vertex
  $x_{gh'}$ is in the dominating set. In either of these cases, the
  vertex $x_{gh}$ is dominated. Therefore, the points in the variety
  $\variety{\Isos}$ are in bijection to the graphs in
  $n_\cG$, $n_\cH$ vertices with minimum dominating sets of size
  $k_\cG$, $k_\cH$ respectively, and their corresponding product graph
  with a dominating set of any size.
\end{proof}

Observe that there are no polynomials in $\Isos$ enforcing minimality
on the dominating set in the product graph. This is essential when we
tie all of these ideals and definitions together, and model Vizing's
conjecture as a sum-of-squares problem. In particular, we model
Vizing's conjecture as an ideal/polynomial pair, where the polynomial
must be nonnegative on the variety associated with the ideal if and
only if Vizing's conjecture is true.

\begin{definition}\label{def:fStar}
  Given the graph classes $\cG$ and $\cH$, define
  \begin{equation*}
    f^{\ast}
    = \biggl(\, \sum_{gh \in V(\cG) \times V(\cH)} x_{gh} \biggr) - k_\cG k_\cH.
  \end{equation*}
\end{definition}
\begin{theorem}\label{proposition:VizSDPConj}
  Vizing's conjecture is true if and only if for all values of $n_\cG$, $k_\cG$, $n_\cH$ and $k_\cH$, $f^{\ast}$ is nonnegative on~$\variety{\Isos}$, i.e.,
  \begin{equation*}
    \forall z \in \variety{\Isos} \colon f^{\ast}(z) \geq 0.
  \end{equation*}
\end{theorem}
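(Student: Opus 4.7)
The plan is to prove this theorem as a direct translation between the combinatorial statement of Vizing's conjecture and the algebraic nonnegativity statement, leveraging the bijection established in Theorem~\ref{def:IGHsdp}. The key observation underlying both directions is that for any point $z \in \variety{\Isos}$, the sum $\sum_{gh} x_{gh}$ evaluated at $z$ equals exactly $|D|$, where $D$ is the dominating set of the associated product graph that $z$ encodes; consequently $f^{\ast}(z) = |D| - k_\cG k_\cH$.

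For the forward direction, I would fix arbitrary parameters $n_\cG, k_\cG, n_\cH, k_\cH$ and take any $z \in \variety{\Isos}$. By Theorem~\ref{def:IGHsdp} this point corresponds to a triple $(G, H, D)$ with $G \in \cG$, $H \in \cH$ (so $\gamma(G) = k_\cG$ and $\gamma(H) = k_\cH$), and $D$ some dominating set of $G \Box H$. Assuming Conjecture~\ref{conjecture:vizing}, I would chain the inequalities
\begin{equation*}
  k_\cG\, k_\cH = \gamma(G)\,\gamma(H) \leq \gamma(G \Box H) \leq |D|,
\end{equation*}
so that $f^{\ast}(z) \geq 0$.

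For the reverse direction, I would start with arbitrary graphs $G$ and $H$ and set $n_\cG := |V(G)|$, $k_\cG := \gamma(G)$, $n_\cH := |V(H)|$, $k_\cH := \gamma(H)$. After relabeling the vertices of $G$ and $H$ so that the fixed sets $D_\cG$ and $D_\cH$ from Definition~\ref{def:IG} become minimum dominating sets of the relabeled graphs, I would choose $D$ to be a \emph{minimum} dominating set of $G \Box H$. Theorem~\ref{def:IGHsdp} then produces a point $z \in \variety{\Isos}$ realizing the triple $(G, H, D)$, and the hypothesis $f^{\ast}(z) \geq 0$ rearranges to $\gamma(G \Box H) = |D| \geq k_\cG k_\cH = \gamma(G)\,\gamma(H)$, which is exactly Vizing's conjecture for $G$ and $H$.

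The proof is essentially a bookkeeping exercise once Theorem~\ref{def:IGHsdp} is in hand, so there is no real obstacle; the only subtlety I would be careful about is that $\Isos$ bakes in particular choices $D_\cG, D_\cH$ of dominating sets, so in the reverse direction I must note that relabeling vertices preserves all domination numbers and the Cartesian product structure, thereby losing no generality. All the genuine content — that $\variety{\Isos}$ correctly parameterizes the relevant combinatorial triples — has already been packaged into the earlier theorems.
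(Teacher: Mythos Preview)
Your proposal is correct and follows essentially the same approach as the paper's proof: both directions are handled by translating through the bijection of Theorem~\ref{def:IGHsdp} and observing that $f^{\ast}(z) = |D| - k_\cG k_\cH$. If anything, you are more explicit than the paper about the relabeling needed in the reverse direction to accommodate the fixed choices of $D_\cG$ and $D_\cH$, a detail the paper's proof elides.
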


\begin{proof}
  Assume that Vizing's conjecture is true. Therefore, for all graphs
  $G\in\cG$ and $H\in\cH$, we have $\gamma(G \Box H) \geq \gamma(G)\gamma(H)$. In
  particular, $\gamma(G \Box H) - k_\cG k_\cH \geq 0$, for all
  values of $n_\cG$, $k_\cG$, $n_\cH$ and $k_\cH$. Since $f^{\ast}$ contains
  a sum over all the $x_{gh}$ variables, which represent a dominating
  set in $G\Box H$ of any size, we have $f^{\ast}(z) \geq 0$ 
  for all $z \in \variety{\Isos}$.

  Similarly,
  if $f^{\ast}(z) \geq 0$ for all $z \in \variety{\Isos}$, every
  dominating set in $G \Box H$ has size at least $k_\cG k_\cH$.
  In particular, the minimum dominating set in $G \Box H$ has
  size at least $k_\cG k_\cH$ and Vizing's conjecture is true.
\end{proof}

\begin{corollary}\label{proposition:VizSDPlsos}
  Vizing's conjecture is true if and only if for each $n_\cG$, $k_\cG$, $n_\cH$ and $k_\cH$, there exists an integer $\ell$ such that $f^{\ast}$
  is $\ell$-sos modulo $\Isos$.
\end{corollary}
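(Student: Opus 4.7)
The plan is to derive this corollary by combining Theorem~\ref{proposition:VizSDPConj} with Lemma~\ref{lem:NonNegVarEquivEllSos}. Theorem~\ref{proposition:VizSDPConj} has already reduced Vizing's conjecture to the nonnegativity of $f^{\ast}$ on $\variety{\Isos}$ for all admissible choices of $n_\cG$, $k_\cG$, $n_\cH$, $k_\cH$, so the task reduces to showing that, for each such fixed parameter tuple, nonnegativity of $f^{\ast}$ on $\variety{\Isos}$ is equivalent to $f^{\ast}$ being $\ell$-sos modulo $\Isos$ for some nonnegative integer $\ell$. This is exactly the content of Lemma~\ref{lem:NonNegVarEquivEllSos}, provided its hypotheses are met.

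Consequently, the work boils down to verifying three hypotheses for the ideal $\Isos$: that $\variety{\Isos}$ is finite and real, that $\Isos$ is radical, and that $f^{\ast}$ sends $\variety{\Isos}$ into $\R$. For the first, I would observe that $\Isos$ contains the boolean generators $e_{gg'}^2 - e_{gg'}$, $e_{hh'}^2 - e_{hh'}$ and $x_{gh}^2 - x_{gh}$ for every variable of the ambient polynomial ring $P_{\cG\Box\cH} = \K[e_\cG \cup e_\cH \cup x_{\cG\Box\cH}]$, forcing every coordinate of every variety point to lie in $\{0,1\}\subseteq \R$; this gives both finiteness and realness at once. For the second, the same boolean generators are univariate and square-free in each variable, so Lemma~\ref{lem_kreuzer_rad} applies and $\Isos$ is radical. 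For the third, $f^{\ast}$ has integer coefficients evaluated on $\{0,1\}$-vectors, hence its values on $\variety{\Isos}$ are integers and in particular real.

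With these hypotheses secured, Lemma~\ref{lem:NonNegVarEquivEllSos} yields, for each parameter tuple, the equivalence between $\forall z \in \variety{\Isos} \colon f^{\ast}(z) \geq 0$ and the existence of an $\ell$ for which $f^{\ast}$ is $\ell$-sos modulo $\Isos$. Chaining this parameterwise equivalence with Theorem~\ref{proposition:VizSDPConj} delivers the corollary.

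The main obstacle here is essentially bookkeeping rather than anything deep: one must make sure that the boolean generators inherited from $I_\cG$, $I_\cH$ and $I_{\cG\Box\cH}$ really do collectively cover every variable of $P_{\cG\Box\cH}$, so that the finiteness/radicality argument via Lemma~\ref{lem_kreuzer_rad} goes through for the combined ideal $\Isos$ rather than only for the three component ideals in isolation. Once that routine verification is in place, the corollary follows immediately by quantifying over $n_\cG$, $k_\cG$, $n_\cH$, $k_\cH$.
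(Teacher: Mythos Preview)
Your proposal is correct and follows essentially the same route as the paper's proof: observe that $\Isos$ contains a boolean polynomial $t^2-t$ in every variable, invoke Lemma~\ref{lem_kreuzer_rad} to get radicality (and finiteness of $\variety{\Isos}$), then apply Lemma~\ref{lem:NonNegVarEquivEllSos} and combine with Theorem~\ref{proposition:VizSDPConj}. You are slightly more explicit than the paper in checking realness of the variety and that $f^{\ast}(\variety{\Isos})\subseteq\R$, but these are routine verifications rather than a different strategy.
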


\begin{proof}
  The ideal $\Isos$ contains the univariate polynomial $x^2 - x$ for
  each variable. Therefore, by Lemma \ref{lem_kreuzer_rad}, $\Isos$ is
  radical. By similar reasoning, $\variety{\Isos}$ is
  finite. Therefore, by Lemma \ref{lem:NonNegVarEquivEllSos}, we know
  that if a polynomial is nonnegative on $\variety{\Isos}$, there
  exists an integer $\ell$ such that the polynomial is $\ell$-sos modulo
  $\Isos$.
\end{proof}

In this section, we have drawn a parallel between Vizing's conjecture
and a sum-of-squares problem. We defined the ideal/polynomial pair
$(\Isos$, $ f^{\ast})$ such that
$f^{\ast}(z) \geq 0$ for all $z \in \variety{\Isos}$ if and only if
Vizing's conjecture is true. In the next section, we describe exactly
how to find these Positivstellensatz certificates of nonnegativity,
or equivalently, these Positivstellensatz certificates that Vizing's conjecture is
true.

%%%%%%%%%%%%%%%%%%%%%%%%%%%%%%%%%%%%
\section{Methodology}
\label{sec:methodology}

\subsection{Overview of the Methodology}
In our approach to Vizing's conjecture we ``partition'' the graphs $G$, $H$ and $G \Box H$ by their sizes (number of vertices) $n_\cG$ and $n_\cH$ and by the sizes of
their dominating sets $k_\cG$ and $k_\cH$. 
Note that we aim
for certificates for all partitions as this would prove the conjecture.
However in the following we present
our method which works for a fixed partition (i.e. for fixed values of
$n_\cG$, $k_\cG$, $n_\cH$ and $k_\cH$), and only later relax this and
generalize to parametrized partitions. 

The outline is as follows:

\begin{itemize}
\item Step 1: Model the graph classes as ideals
\item Step 2: \begin{minipage}[t]{0.8\linewidth}
    Formulate Vizing’s conjecture as \\
    sum-of-squares existence question
  \end{minipage}
\item Step 3: Transform to a semidefinite program
\item Step 4: Obtain a numeric certificate
\item Step 5: Guess an exact certificate
\item Step 6: Computationally verify the certificate
\item Step 7: Generalize the certificate
\item Step 8: Prove correctness
\end{itemize}

For fixed values of $n_\cG$, $k_\cG$, $n_\cH$ and $k_\cH$ the first
step is to create the ideal $\Isos$ as described in
Section~\ref{sec_poly_I}, in particular Definition~\ref{def:Isos}. To
summarize, we create the ideal $\Isos$ in a suitable polynomial ring
in such a way that the points in
the variety $\variety{\Isos}$ are in bijection to the triple of graphs
whose components are $\cG$, $\cH$ and their corresponding product graph
with a dominating set of any size. 
In this polynomial ring there is a variable for each possible edge
of $\cG$ and $\cH$ (indicating whether this edge is
present or not in a particular graph) and for each vertex of $\cG \Box \cH$ (indicating whether it is in the
dominating set or not).

The second step is to using the polynomial ring variables mentioned above to 
reformulate Vizing's conjecture: It is true for a fixed partition if a certain polynomial is
nonnegative if evaluated at all points in the variety~$\variety{\Isos}$ of the
constructed ideal. 
For showing that the polynomial is nonnegative, we aim for rewriting it
as a finite sum of squares of polynomials (modulo the ideal~$\Isos$). If we
find such polynomials, then these form a certificate for Vizing's
conjecture for the fixed partition.
To be more precise and as already described in
Section~\ref{sec_poly_I}, Vizing's conjecture is true for this fixed values
of $n_\cG$, $k_\cG$, $n_\cH$ and $k_\cH$ if and only if
$f^{\ast}$ is $\ell$-sos modulo $\Isos$.

In the subsequent Section~\ref{sec:sdp} we describe how to perform step three and to do another
reformulation, namely as a semidefinite program. 
Note that in order of doing so, we need to have
specified $\ell$.
Note also that in order to prepare the
semidefinite program, we need basis polynomials (i.e., special generators)
of the ideals. These
are obtained by computing a Gröbner basis of the ideal;
see~\cite{coxetal}.

The fourth step (Section~\ref{sec:step:numerical-certificate}) is now to solve the
semidefinite program.
If the program is infeasible (i.e., there exists no feasible solution),
we increase $\ell$. 
On the other hand, if the program is 
feasible, then we can construct a numeric
sum-of-squares certificate.
As the underlying system of equations---therefore
the future certificate---is quite large, we iterate
the following tasks: Find a numeric solution to the semidefinite
program, find or guess some structure in the solution, use these new
relations to reduce the size of the semidefinite program, and begin
again with solving. This reduces the solution space and therefore
potentially also the size (number~$d$ of summands) of the certificate and
the number of monomials of the $s_i$ from Definition~\ref{definition:ellsos}.
The procedure above goes hand-in-hand with our next step
(Section~\ref{sec:step:exact-certificate}), namely obtaining (one
might call it guessing) an exact certificate out of the numeric
certificate.

Once we have a candidate for an exact certificate, we can check its
validity computationally by summing up the squares and reducing modulo the ideal; see
our step~six described in Section~\ref{sec:step:check-certificate}. 

We want to
point out, that we still consider Vizing's conjecture for a particular partition of graphs.
However, having such certificates for some partitions, one can go for
generalizing them by introducing parametrized partitions of
graphs. Our seventh step in Section~\ref{sec:step:generalize-certificate} provides more
information.

The final step is to 
prove that the newly obtained, generalized certificate candidate is
indeed a certificate; see Section~\ref{sec:step:generalize-certificate}.

%%%%%%%%%%%%%%%%%%%%%%%%%%%%%%%%%%%%
\subsection{Transform to a Semidefinite Program}
\label{sec:sdp}
Semidefinite programming refers to the class of optimization problems 
where a linear function with a symmetric matrix variable is optimized
subject to linear constraints and the constraint that the matrix
variable must be positive semidefinite. A semidefinite program (SDP) can be solved in
polynomial time. In practice the most prominent methods for solving an SDP
efficiently are interior-point algorithms.
We use the solver Mosek~\cite{mosek} within Matlab. For more details on solving SDPs and on interior-point algorithms see~\cite{handbook-1}.

It is possible to check whether a polynomial $f$ is 
$\ell$-sos modulo an ideal with semidefinite programming. We refer
to \cite[pg.~298]{sos-instructions} for detailed information and
examples. We will now present how to do so for our setting only. 

Let us first fix (for example, by computing) a reduced Gröbner basis~$B$ of~$\Isos$ and
fix a nonnegative integer~$\ell$. Denote by $v$ the vector of all
monomials in our polynomial ring~$P$ of degree at most $\ell$ which can not be
reduced\footnote{Algorithmically speaking, we say that a polynomial~$f$ is
  reduced modulo the ideal~$I$ if $f$ is the representative of~$f+I$ which is
  invariant under reduction by a reduced Gröbner basis of the ideal~$I$.}
modulo~$\Isos$ by the Gröbner basis~$B$.
Let $p$ be the length of the vector
$v$. Then $f^{\ast}$ (of Definition~\ref{def:fStar}) is $\ell$-sos modulo $\Isos$ if and only if there
is a positive semidefinite matrix $X\in \R^{p \times p}$
such that $f^{\ast}$ is equal to
\begin{equation*}
  v^{T}Xv
\end{equation*}
when reduced over~$B$. Hence the SDP we end up with optimizes the matrix
variable
$X\in \R^{p \times p}$ subject to linear constraints that
guarantee the above equality. The objective function can be chosen
arbitrarily because any matrix satisfying the constraints is
sufficient for our purpose. More will be said on this later.

If the SDP is feasible, due to the positive
semidefiniteness we can decompose the solution $X$ into $X = S^{T}S$. Then we
define the polynomial~$s_{i}$ by the $i$-th row of $Sz$ and obtain
\begin{equation}\label{eq:sdp-sos-certificate}
 v^{T}Xv = (Sv)^T(Sv) = \sum_i
  s_{i}^2 \equiv f^{\ast} \mod \Isos.
\end{equation}
Note that the last congruence holds due to the constraints in
the SDP. 
Equation~\eqref{eq:sdp-sos-certificate} then certifies that $f^{\ast}$ can
be written as a sum of squares of
the~$s_i$, and hence, $f^{\ast}$ is $\ell$-sos modulo~$\Isos$ according to
Definition~\ref{definition:ellsos}. 

If the SDP is infeasible, we know only that there is no certificate of
degree~$\ell$. We increase~$\ell$ to $\ell+1$, because $f^{\ast}$ could still be
$(\ell+1)$-sos modulo~$\Isos$ or posses a certificate of even higher degree.
However, if no new reduced monomials appear in this increment, then
by Lemma~\ref{lem:NonNegVarEquivEllSos} and Theorem~\ref{proposition:VizSDPConj}
Vizing's conjecture does not hold.

\begin{example} \label{example:3232}
    We consider the graph classes $\cG$ and $\cH$ with $n_\cG = 3$,
    $k_\cG = 2$, $n_\cH=3$ and $k_\cH = 2$. Using
    SageMath~\cite{sagemath} we
    construct the ideal $\Isos$, generated by 32~polynomials in
    15~variables. Again using SageMath, we find a Gröbner basis of
    size~95. 

    First, we check the existence of a 1-sos certificate.
    The vector $v$ for $\ell=1$ has length~$12$, i.e., we set
    up an SDP with a matrix variable $X \in \R^{12 \times 12}$. Imposing
    the necessary constraints to guarantee $\sum_{i} s_{i}^{2} \equiv f^{\ast}
    \mod \Isos$ leads to $67$~linear equality constraints. Interior-point algorithms detect
    infeasibility of this SDP in less than half a second, this implies
    that there is no 1-sos certificate. 

    Setting up the SDP for checking the existence of a 2-sos
    certificate results in a problem with a matrix variable of dimension~$67$
    and~$359$ linear constraints. Interior-point algorithms find a
    solution $X$ of this SDP in $0.72$~seconds, this guarantees the
    existence of a numeric $2$-sos certificate for these graph classes. \hfill $\Box$  

%     SAGE Output:
%     \begin{verbatim}
% 2017-08-23 15:31:22.452788 INFO:vizing start (nG=3, kG=2, nH=2, kH=2, ell>=1)
% 2017-08-23 15:31:22.452983 INFO:vizing constructing the ideal I...
% 2017-08-23 15:31:22.455787 INFO:vizing working in polynomial ring with 10 variables
% 2017-08-23 15:31:22.463167 INFO:vizing ideal I generated by 22 polynomials
% 2017-08-23 15:31:22.463365 INFO:vizing computing groebner basis (polynomials over Rational Field, order=deglex)...
% 2017-08-23 15:31:22.468350 INFO:vizing groebner basis GB with size 22 found    
% \end{verbatim}
    
    % \begin{itemize}
    % \item For $\ell = 1$: $p_\ell = 12$, $67$ constraints, $0.399719$
    %   seconds solution time, infeasible -> no $1$-sos certificate
    % \item For $\ell = 2$: $p_\ell = 67$, $359$ constraints, $0.722076$
    %   seconds solution time, feasible -> there is a $2$-sos
    %   certificate
    % \end{itemize}
    
\end{example}

%%%%%%%%%%%%%%%%%%%%%%%%%%%%%%%%%%%%
\subsection{Obtain a Numeric Certificate}
\label{sec:step:numerical-certificate}

%As long as the size of an SDP is rather small (i.e., small dimension of
%the matrix variable and small number of constraints), numerical solutions
%can be found quickly and up to rather high precision.
%However, when the matrix dimension and the number of constraints
%increase, numerical troubles might arise or the problem turns out to be
%illposed.
%In this case no conclusions of infeasibility or optimal solutions can
%be drawn. Hence, we will have to take care of keeping the size of the
%SDP small.

As described in Section~\ref{sec:sdp} above, after solving the SDP we decompose
the solution $X$. We do so be computing the eigenvalue decomposition
$X=V^TDV$ and then setting $S=D^{1/2}V$. 
($D$ is the diagonal matrix having the eigenvalues on the main diagonal. Since
$X$ is positive semidefinite, all eigenvalues are nonnegative and we can
compute $D^{1/2}$.)
The matrices $X$, $V$, and $D$ are obtained
through numeric computations, hence there might be entries in $D$ that are rather
close to zero but not considered as zero. 
We may try setting these almost-zero eigenvalues to zero, which
reduces the number of polynomials of the sum-of-squares certificate. 

Furthermore, a zero-column in $S$ means that the corresponding
monomial is not needed in the certificate. Hence, we may try to
compute a certificate where we remove all monomials corresponding to
almost-zero columns. This can decrease the size of the SDP considerably and a smaller size of the matrix and fewer constraints is favorable for solving the SDP.
Of course, if removing these monomials leads to infeasibility of the SDP, then removing these monomials was not correct.

%In fact, when solving the SDP, we do not require an optimal solution but only
%a feasible $X$. Hence we have
%many options for choosing the objective function of the SDP. 

As already mentioned we can choose the objective function arbitrarily.  
Our
experiments show that different objective functions lead to
(significantly) different solutions. Therefore, we carefully choose a
suitable objective function leading to a ``nice'' solution for each
instance.

\begin{example}
\label{example:3232-2}
    We look again at the case we considered in Example~\ref{example:3232}, that is $\cG$ and $\cH$ with $n_\cG = 3$,
    $k_\cG = 2$, $n_\cH=3$ and $k_\cH = 2$, for which we already obtained an optimal solution $X$ and a numeric $2$-sos certificate.
    
    After computing (numerically) the eigenvalue decomposition $X =
    V^TDV$, we set all almost-zero eigenvalues to zero and compute $S
    = D^{1/2}V$, which results in a $12 \times 67$ matrix, i.e.,
    55~eigenvalues are considered as zero.
    % \begin{verbatim}
    % indices = (diagD>eps);
    % S = diag(sqrt(diagD(indices)))*(V(:,indices))';
    % \end{verbatim}
    In Figure~\ref{fig:heatmap1} a heat map of matrix 
    $S$ is displayed. It seems unattainable to convert this obtained solution
    to an exact certificate (see Section~\ref{sec:step:exact-certificate}),
    so we take a different path.
    \begin{figure}\centering
    \includegraphics[width=0.7\linewidth]{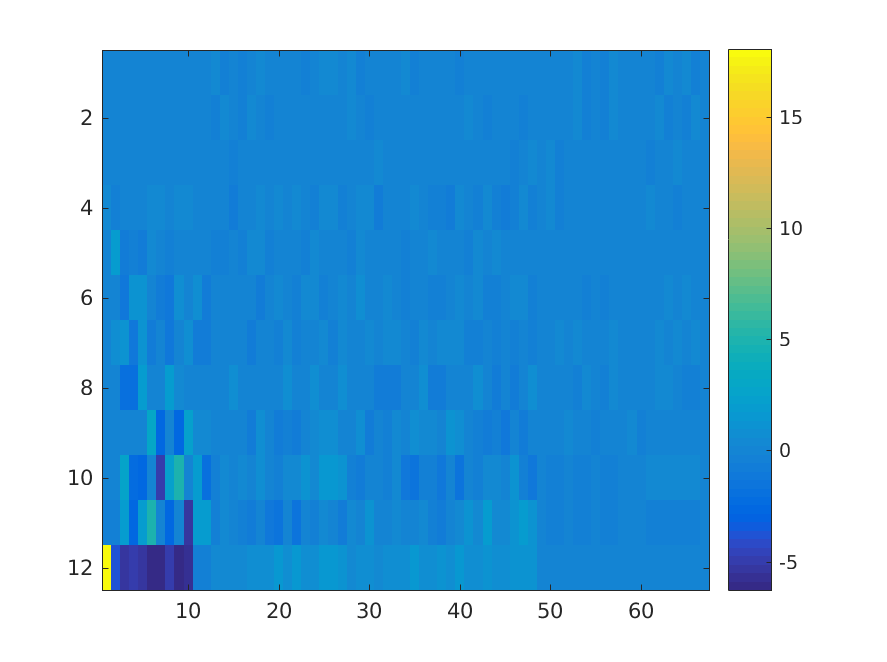}
    \caption{Plotting the entries of matrix $S$. $S_{i,j}$ is the coefficient of the $j$-th monomial in the $i$-th polynomial $s_i$ of the numeric sum-of-squares certificate.}
    \label{fig:heatmap1}
    \end{figure}
    % The entry $S_{i,j}$ of $S$ is the coefficient of the $j$-th monomial of $s_i$. Hence nr of rows of $S$ = number of polynomials of the sos certificate and nr of cols of $S$ is the number of monomials. In this plot: 12 polynomials needed, all 67 monomials are needed.
    
    Using different objective functions and aiming for a certificate
    where only certain monomials appear can lead to results with a
    clearer structure. If the $i$-th monomial should not be included
    we can set the corresponding $i$-th row and column of $X$ equal to
    zero and obtain another SDP, where we have fewer variables and
    modified constraints. 
    We now try to use only the 19 monomials $1$, $x_{gh}$ and $x_{gh}x_{gh'}$ for all $g \in V(\cG)$ and all $h$, $h'\neq h \in V(\cH)$.
    % The order of monomials that occur in $S$:
    % \begin{verbatim}
    % 1 (1, 1)
    % 2 (2, xGH_2_2)
    % 3 (3, xGH_2_1)
    % 4 (4, xGH_2_0)
    % 5 (5, xGH_1_2)
    % 6 (6, xGH_1_1)
    % 7 (7, xGH_1_0)
    % 8 (8, xGH_0_2)
    % 9 (9, xGH_0_1)
    % 10 (10, xGH_0_0)
    % 11 (13, xGH_2_1*xGH_2_2)
    % 12 (14, xGH_2_0*xGH_2_2)
    % 13 (15, xGH_2_0*xGH_2_1)
    % 14 (22, xGH_1_1*xGH_1_2)
    % 15 (26, xGH_1_0*xGH_1_2)
    % 16 (27, xGH_1_0*xGH_1_1)
    % 17 (40, xGH_0_1*xGH_0_2)
    % 18 (47, xGH_0_0*xGH_0_2)
    % 19 (48, xGH_0_0*xGH_0_1)
    % \end{verbatim}    
    
    This results in an SDP with a matrix variable of dimension~$19$ and $99$ constraints. The
    SDP can be solved in $0.48$~seconds, and again, we obtain matrix
    $S$ (after setting almost-zero eigenvalues to zero), which now is
    of dimension $4 \times 19$. A heat map is given in Figure~\ref{fig:heatmap2}.
    % Objective function: 
    % \begin{verbatim}
    %     C = ones(n);
    %     C(1:nG*nH+1,1:nG*nH+1) = 2; %ill posed for 1, 10, 1.5, 3
    %     C = -C; 
    % \end{verbatim}
    
    \begin{figure}\centering
    \includegraphics[width=0.7\linewidth]{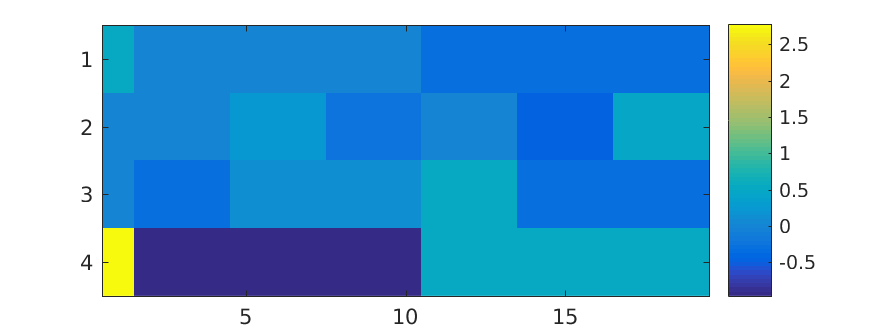}
    \caption{Plotting the entries of matrix $S$ as in Figure~\ref{fig:heatmap1}. The numeric sos certificate consists of 4 (number of rows) polynomials $s_1$, $\dots$, $s_4$ in 19 (number of columns) monomials.}
    \label{fig:heatmap2}
    \end{figure}
    
%    Good News: the SDP is feasilbe (-> there is a $2$-sos certificate that uses only those monomials we tried it with) and, even better: only 4 (instead of 67 before!) polynomials are needed in the certificate. 

    As one sees in Figure~\ref{fig:heatmap2}, $S$ has a certain block structure,
    suggesting that
    in each $s_i$ the coefficients of the monomials depend only on the index $g\in V(\cG)$ 
    and there is no dependence on the indices $h \in V(\cH)$.
% (i.e.: coefficient of $x_{g,h_1}$ is equal to coefficient of $x_{g,h_2}$ for all $\{h_1,h_2\}$). (Because of the blocks of coefficients, which are apparently the same for all )
    Therefore, we aim for a $2$-sos certificate of the form 
    \begin{subequations}\label{eq:possible-certificate-3232-difficult}
    \begin{align}
      s_{i} &=  \nu_i
              + \sum_{g \in V(\cG)} \lambda_{g,i} \biggl( \sum_{h \in V(\cH)}  x_{gh}\biggr)
              +\sum_{g \in V(\cG)} \mu_{g,i} \biggl( \sum_{\set{h, h'} \subseteq V(\cH)}  x_{gh}x_{gh'}\biggr)
              \intertext{for $i \in \set{1, \dots, n_\cG}$ and}
              s_{0} &= \alpha
                      + \beta \biggl( \sum_{g \in V(\cG)} \sum_{h \in V(\cH)} x_{gh} \biggr)
                      + \gamma \sum_{g \in V(\cG)} \biggl( \sum_{\set{h, h'} \subseteq V(\cH)} x_{gh}x_{gh'}\biggr),
    \end{align} 
    \end{subequations}
   where the coefficients $\alpha$, $\beta$, $\gamma$, $\nu_{i}$,
   $\lambda_{g,i}$ and $\mu_{g,i}$ are the entries of $S$. However, we only
   have the numeric values
   \begin{equation*}
     S = \begin{small}\left(\begin{array}{rrrrrrr}
0.535&0.011&0.011&0.011&-0.289&-0.289&-0.289\\
0.000&0.000&0.236&-0.236&-0.001&-0.471&0.472\\
-0.000&-0.272&0.136&0.136&0.544&-0.273&-0.272\\
2.778&-0.962&-0.962&-0.962&0.536&0.536&0.536\\
\end{array}\right)
\end{small}
   \end{equation*}
   at hand and it is not obvious how to guess suitable
   exact numbers from it. 
   In contrast, looking at the values
   \begin{equation*}
     X = \begin{small}\left(\begin{array}{rrrrrrr}
8.000&-2.667 &-2.667&-2.667& 1.333& 1.333&1.333\\
-2.667&1.000 &0.889 & 0.889&-0.667&-0.444&-0.444\\
-2.667&0.889 &1.000 & 0.889&-0.444&-0.667&-0.444\\
-2.667&0.889 &0.889 & 1.000&-0.444&-0.444&-0.667\\
 1.333&-0.667&-0.444&-0.445& 0.667& 0.222&0.222\\
 1.333&-0.444&-0.667&-0.445& 0.222& 0.667&0.222\\
 1.333&-0.444&-0.444&-0.667& 0.222& 0.222&0.667\\
\end{array}\right)
\end{small},
   \end{equation*}
   it seems almost
obvious which simple algebraic numbers the entries of $X$ could be, e.g. $0.667 = 2/3$.
    We will use that in the following section.
   \hfill $\Box$
\end{example}

%%%%%%%%%%%%%%%%%%%%%%%%%%%%%%%%%%%%
\subsection{Guess an Exact Certificate}
\label{sec:step:exact-certificate}

We now have a guess for the structure of the certificate, but coefficients that are simple algebraic numbers are hard to
determine from the numbers in $S$. On the other hand, the exact numbers in $X$
seem to be rather obvious so we go back to the relation
$X=S^{T}S$. It implies that if we fix two monomials then the inner product of the vectors of the coefficients of these monomials in all the $s_i$ has to be equal to the corresponding number in $X$.

Setting up a system of equations using all possible inner products, we may
obtain a solution to this system. This solution determines the coefficients in
the certificate (and the certificate might be simplified even further).

\begin{example}
\label{example:3232-3}

    We continue Example~\ref{example:3232}, that is we consider the graph classes $\cG$ and $\cH$ with $n_\cG = 3$,
    $k_\cG = 2$, $n_\cH=3$ and $k_\cH = 2$. 
  
    The exact numbers in $X$ given in Example~\ref{example:3232-2} can be guessed easily.
    In fact, if this guess for $X$ is correct, every choice of $S$ such that
    $S^TS = X$ gives a certificate. Using the relation $S^TS = X$ we set up a
    system of equations on the parameters of~\eqref{eq:possible-certificate-3232-difficult}. 
    To be more precise, let $\lambda_g = (\lambda_{g,i})_{i=1,\dots,n_\cG}$,
    $\mu_g = (\mu_{g,i})_{i=1,\dots,n_\cG}$ and $\nu =
    (\nu_{i})_{i=1,\dots,n_\cG}$. Then we can define the vectors $a =
    \binom{\nu}{\alpha}$, $b_g = \binom{\lambda_g}{\beta}$ and $c_g =
    \binom{\mu_g}{\gamma}$, and $S^TS = X$ (together with the guessed values for $X$) implies that
    \begin{align*}
    \langle a,a \rangle     &= 2(n_\cG - 1)^2, & & \\
    \langle a,b_g \rangle   &= -\tfrac{4}{3}(n_\cG - 1),&    \langle a,c_g \rangle &= \tfrac{2}{3}(n_\cG - 1),\\
    \langle b_g,b_g \rangle &= 1 ,          &    \langle b_g,b_{g'} \rangle &= \tfrac{8}{3},\\
    \langle c_g,c_g \rangle &= \tfrac{6}{9}, &    \langle c_g,c_{g'} \rangle &= \tfrac{2}{9},\\
    \langle b_g,c_g \rangle &= -\tfrac{6}{9},&    \langle b_g,c_{g'} \rangle &= -\tfrac{4}{9}
    \end{align*}
    has to hold for each $g\in V(\cG)$, where $\langle\cdot,\cdot\rangle$
    denotes the standard inner product.
    Under the assumption that our guess for $X$ was correct,
    each solution to this system of equations leads to a valid sum-of-squares
    certificate (\ref{eq:possible-certificate-3232-difficult}a and
    \ref{eq:possible-certificate-3232-difficult}b). 

    We want a sparse certificate and the numeric solution suggests that $\nu_2 = \nu_3 = 0$ holds, so we try to obtain a solution with also $\nu_1 = 0$ (even though the numeric solution does not fit into that setting).
    Using these values, the equations involving the vector $a$ determine the exact values for $\alpha$, $\beta$ and $\gamma$ as $\alpha = \sqrt{2}(n_\cG - 1)$, $\beta = -\frac{2}{3}\sqrt{2}$ and $\gamma = \frac{1}{3}\sqrt{2}$. 
    With that, the system of equations simplifies to
    \begin{align*}
    \langle \lambda_g,\lambda_g \rangle &= \tfrac{1}{9},  & \langle \lambda_g,\lambda_{g'} \rangle &= 0,\\
    \langle \mu_g,\mu_g \rangle         &= \tfrac{4}{9},  & \langle \mu_g,\mu_{g'} \rangle &= 0,\\
    \langle \lambda_g,\mu_g \rangle     &= -\tfrac{2}{9}, & \langle \lambda_g,\mu_{g'} \rangle &= 0.
    \end{align*}
    Calculating $\sum_{i=1}^{n_\cG}(s_i)^2$ we find out that, due to the system of equations, the sum-of-squares simplifies to
    \begin{align*}
    \sum_{i=1}^{n_\cG}(s_i)^2 = \frac{1}{9}\sum_{g \in V(\cG)}\biggl( 
    \biggl( \sum_{h \in V(\cH)}  x_{gh}\biggr) -2 \biggl( \sum_{\set{h, h'} \subseteq V(\cH)}  x_{gh}x_{gh'}\biggr)
    \biggr)^2.
    \end{align*}
    Hence, if~\eqref{eq:possible-certificate-3232-difficult} is a sum-of-squares certificate then also
    \begin{subequations}\label{eq:possible-certificate-3232-easy}
    \begin{align}
      s_{0} &=  \alpha
              + \beta \biggl( \sum_{g \in V(\cG)} \sum_{h \in V(\cH)} x_{gh} \biggr)
              + \gamma \biggl(\sum_{g \in V(\cG)}  \sum_{\set{h, h'} \subseteq V(\cH)} x_{gh}x_{gh'}\biggr),\\    
    s_{g} &= \frac{1}{3}\biggl(  \sum_{h \in V(\cH)}  x_{gh} -  2 \sum_{\set{h, h'} \subseteq V(\cH)}  x_{gh}x_{gh'} \biggr)
    \quad\text{for $ g \in V(\cG)$},
    \end{align}
  \end{subequations}
    where $\alpha = \sqrt{2}(n_\cG - 1)$, $\beta = -\frac{2}{3}\sqrt{2}$ and $\gamma = \frac{1}{3}\sqrt{2}$ is a sum-of-squares certificate.
    \hfill $\Box$
\end{example}

%%%%%%%%%%%%%%%%%%%%%%%%%%%%%%%%%%%%
\subsection{Computationally Verify the Certificate}
\label{sec:step:check-certificate}

When a certificate is conjectured, it is straightforward to verify
it computationally via SageMath. To do so, it is necessary to compute the
Gröbner basis of $\Isos$. Observe that at this point, semidefinite programming is no longer needed.

\begin{example}
 We computationally verified (using SageMath) the certificate derived in
 Example~\ref{example:3232-3} for the graph classes $\cG$ and $\cH$ with $n_\cG = 3$,
    $k_\cG = 2$, $n_\cH=3$ and $k_\cH = 2$. 
    \hfill $\Box$
\end{example}

%%%%%%%%%%%%%%%%%%%%%%%%%%%%%%%%%%%%
\subsection{Generalize the Certificate and Prove Correctness}
\label{sec:step:generalize-certificate}

In Sections \ref{sec:sdp} to \ref{sec:step:check-certificate}, we presented a methodology for obtaining a sum-of-squares certificate for graph classes $\cG$ and $\cH$ with fixed parameters $n_\cG$,
$k_\cG$, $n_\cH$ and $k_\cH$. Assuming that the previously found pattern generalizes, one can iterate the steps outlined above to obtain certificates for larger classes of graphs.
%One can go for generalizing them by introducing parametrized partitions of graphs.

\begin{example}
    We want to generalize the certificate for the graph classes $\cG$ and $\cH$ with $n_\cG = 3$,
    $k_\cG = 2$, $n_\cH=3$ and $k_\cH = 2$ to the case $k_\cG = n_\cG - 1$, $n_\cH = 3$ and $k_\cH = 2$ for $n_\cG \ge 3$.

    Solving the SDP for the cases $n_\cG=4$ and $n_\cG=5$ again yields nicely structured matrices and in fact, all the calculations done for the case $n_\cG=3$ (which we already wrote down parametrized by $n_\cG$ above) go through.
   %  Actually it turns out that that the matrix $X$ does not change too much in the case $k_\cG = n_\cG - 1$, $n_\cH = 3$ and $k_\cH = 2$.
   %  In fact for $n_\cG = $ we have
   % $$X = \input{input/4_3_3_2_2_FixedMonomials_XToGuessSmall.tex}$$
   %  and for $n_\cG = 5$ we have
   % $$X = \input{input/5_4_3_2_2_FixedMonomials_XToGuessSmall.tex}.$$
    Hence, we are able to generalize the sum-of-squares certificate~\eqref{eq:possible-certificate-3232-easy} in the following way.
\end{example}
   
    \begin{theorem} \label{thm_sosCertificate_kHEqnHMinus2}
      For $k_\cG = n_\cG - 1$, $n_\cH = 3$ and $k_\cH = 2$
      Vizing's conjecture is true as the polynomials
        \begin{align*}
          s_{0} &=  \alpha
                  + \beta \biggl( \sum_{g \in V(\cG)} \sum_{h \in V(\cH)} x_{gh} \biggr)
                  + \gamma \biggl(\sum_{g \in V(\cG)}  \sum_{\set{h, h'} \subseteq V(\cH)} x_{gh}x_{gh'}\biggr)   
            \intertext{and}
            s_{g} &= \frac{1}{3}\biggl(  \sum_{h \in V(\cH)}  x_{gh} -  2 \sum_{\set{h, h'} \subseteq V(\cH)}  x_{gh}x_{gh'} \biggr)
                    \quad\text{for $g \in V(\cG)$},
        \end{align*}
        where $\alpha = \sqrt{2}(n_\cG-1)$, $\beta = -\frac{2}{3}\sqrt{2}$
        and $\gamma = \frac{1}{3}\sqrt{2}$,
        are a sum-of-squares certificate with degree~$2$ of $f^{*}$.
    \end{theorem}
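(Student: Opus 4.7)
The plan is to verify directly the congruence
\begin{equation*}
f^{\ast} \equiv s_{0}^{2} + \sum_{g \in V(\cG)} s_{g}^{2} \mod \Isos,
\end{equation*}
which by Definition~\ref{definition:ellsos} shows that $f^{\ast}$ is $2$-sos modulo $\Isos$ and, via Theorem~\ref{proposition:VizSDPConj}, yields Vizing's conjecture for every pair of graphs in this family. Because $n_\cG$ is a free parameter, this identity cannot simply be fed to a Gröbner basis engine on a fixed ring; the reduction has to be carried out symbolically in $n_\cG$. The small cases $n_\cG \in \set{3,4,5}$ flagged in the example just before the statement provide an independent computational cross-check of the sort advertised in Section~\ref{sec:step:check-certificate}.

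First I would introduce the column-local shorthands
\begin{equation*}
U_{g} = \sum_{h \in V(\cH)} x_{gh}, \qquad V_{g} = \sum_{\set{h,h'} \subseteq V(\cH)} x_{gh}x_{gh'}, \qquad W_{g} = \prod_{h \in V(\cH)} x_{gh},
\end{equation*}
so that $s_{g} = \tfrac{1}{3}(U_{g} - 2 V_{g})$ and $s_{0} = \alpha + \beta \sum_{g} U_{g} + \gamma \sum_{g} V_{g}$. Using only the boolean relations $x_{gh}^{2} \equiv x_{gh}$ from \eqref{eq:xgh}, each of $U_{g}^{2}$, $U_{g}V_{g}$ and $V_{g}^{2}$ reduces to an explicit integer combination of $U_{g}$, $V_{g}$ and $W_{g}$ (the only degree-$3$ column-local monomial possible since $n_\cH = 3$). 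Expanding $s_{0}^{2} + \sum_g s_g^{2}$ then splits naturally into a constant term, diagonal sums over $V(\cG)$ of $U_{g}, V_{g}, W_{g}$, and cross-column sums of $U_{g}U_{g'}, U_{g}V_{g'}, V_{g}V_{g'}$ for $g \neq g'$.

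Substituting the specific values $\alpha = \sqrt{2}(n_\cG-1)$, $\beta = -\tfrac{2}{3}\sqrt{2}$, $\gamma = \tfrac{1}{3}\sqrt{2}$, I expect the diagonal $V_{g}$ and $W_{g}$ coefficients to vanish and the diagonal $U_{g}$ coefficient to collapse to~$1$, producing the leading term $\sum_{g} U_{g}$ of $f^{\ast}$ outright. What remains is a constant of size $O(n_\cG^{2})$, an $O(n_\cG)$ correction in $\sum_{g} U_{g}$, a scalar multiple of $\sum_{g} V_{g}$, and the three cross-column sums; together these must be congruent to the constant $-2(n_\cG-1) = -k_\cG k_\cH$ modulo~$\Isos$. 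This is where the non-boolean generators enter: the product-graph domination equations \eqref{eq:xgh_domset}, the $\cH$-minimality constraint \eqref{eq:kcover-fix} with $|S|=1$ (especially rigid because $n_\cH=3$ and $k_\cH=2$), and the $\cG$-side constraints from Definition~\ref{def:IG} (very restrictive because $k_\cG - 1 = n_\cG - 2$) must together relate the surplus cross-column and $V_g$ sums back to $\sum_{g} U_{g}$ and to constants.

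The main obstacle is precisely this residue cancellation, since \eqref{eq:xgh_domset} couples $e$- and $x$-variables in a high-degree polynomial and the parameter $n_\cG$ prevents a direct Gröbner basis run. My plan is to derive the required identities column-by-column: for each fixed $g \in V(\cG)$, work inside the subring generated by $\set{x_{gh}}_{h \in V(\cH)}$ and $e_\cH$, where $n_\cG$ is absent, and extract a closed relation expressing $V_{g}$ (and incidentally $W_{g}$) as an explicit polynomial in $U_{g}$ modulo~$\Isos$; for the cross-column pieces, fix a pair $(g,g') \in V(\cG)^{2}$ and exploit \eqref{eq:kcover-fix} with $|S| = n_\cG - 2$, which on the variety forces at most two columns to miss the minimum dominating set in $\cG\Box\cH$, so that $\sum_{g\neq g'} U_{g}U_{g'}$ and its $V$-analogues are controlled by $\sum_{g} U_{g}$. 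Sewing these local identities together and summing over $V(\cG)$ should then match the whole expression with $f^{\ast}$, and a SageMath verification at $n_\cG \in \set{3,4,5}$ provides an independent corroboration of the resulting coefficients.
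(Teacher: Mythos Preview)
The paper does not actually prove this theorem: right after the statement it says ``The proof is not included here for space considerations'' and only remarks that the certificate can be verified computationally for specific values of~$n_\cG$. So there is no argument in the paper against which to compare your proposal.

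Evaluating your outline on its own merits, the first half is sound. With $T_g \coloneqq 2U_g - V_g$ one checks from the boolean relations alone that $\tfrac{2}{9}T_g^2 + s_g^2 \equiv U_g$, so the $W_g$ terms cancel and the diagonal part of $s_0^2+\sum_g s_g^2$ really does deliver $\sum_g U_g$ with coefficient~$1$. What survives after subtracting $f^\ast$ is the residue
\[
2n_\cG(n_\cG-1)\;-\;\tfrac{4(n_\cG-1)}{3}\sum_{g} T_g\;+\;\tfrac{2}{9}\sum_{g\neq g'} T_gT_{g'},
\]
and the entire content of the theorem is that this polynomial lies in $\Isos$.

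The gap is in how you propose to kill this residue. Your column-local step---working in the subring generated by $\{x_{gh}\}_{h}$ and $e_\cH$ to express $V_g$ as a polynomial in $U_g$---does not actually use the domination or $\cH$-minimality constraints: the identity $V_g \equiv \tfrac{1}{2}U_g(U_g-1)$ already follows from the boolean relations, and no sharper column-local relation is available here. In the $k_\cG = n_\cG$ setting one has $e_{gg'}\equiv 0$ for all pairs, which decouples the columns and makes \eqref{eq:xgh_domset} genuinely local; for $k_\cG = n_\cG - 1$ the vertex $g_0 \notin D_\cG$ still carries edges into $D_\cG$, so the domination equation at $g_0$ couples all columns through the factors $1 - e_{g_0 g'} x_{g'h}$, and the ideal membership of the residue is an inherently global statement. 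Your cross-column plan (``\eqref{eq:kcover-fix} with $|S|=n_\cG-2$ forces at most two columns to miss the minimum dominating set in $\cG\Box\cH$'') does not parse---that constraint concerns $\cG$, not the product graph---and in any case provides no mechanism for producing the specific polynomial combination above. This global cancellation is where all the work sits, and the proposal does not yet contain it.
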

    The proof is not included here for
    space considerations. Of course, once having the theorem above, it
    can be verified computationally for particular parameter values, e.g.
    for $k_\cG=4$ and $n_\cG=5$, where the computation of a Gröbner basis
    is feasible.
    \hfill $\Box$

%%%%%%%%%%%%%%%%%%%%%%%%%%%%%%%%%%%%
\section{Further Exact Certificates} \label{sec_comp_res}

In the previous section we saw by an example how to use our
machinery combined with clever guessing in order to obtain sum-of-squares
certificates for proving that Vizing's conjecture holds for fixed
values of $n_\cG$, $k_\cG$, $n_\cH$ and $k_\cH$, and how this can be used to
obtain certificates for a less restricted set of parameters. We will use this
section now in order to present further certificates that we found using
the method above and for which we were able to prove
correctness. Again we omit proofs due to space limitations.

\subsection{Certificates for $k_\cG = n_\cG$ and $k_\cH = n_\cH-1$}
The easiest case is the one with $k_\cG = n_\cG$ and $k_\cH = n_\cH-1$. 
We found the following sum-of-squares certificate and therefore know,
that Vizing's conjecture holds in this case.

\begin{theorem}\label{thm:sosCertificaten_kGeqnG_kHeqnHminus1_easy}
  For $k_\cG = n_\cG \geq 2$ and $k_\cH = n_\cH-1 \geq 2$,
  Vizing's conjecture is true as the polynomials
    \begin{equation*}
      s_{g} = \biggl( \sum_{h \in V(\cH)}  x_{gh} \biggr) - k_\cH
      \quad\text{for $g \in V(\cG)$},
    \end{equation*}
    are a $1$-sos certificate of $f^{*}$.
\end{theorem}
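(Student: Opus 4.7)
The plan is to verify the congruence $f^{*} \equiv \sum_{g \in V(\cG)} s_{g}^{2} \mod \Isos$ by working on the variety. Since $\Isos$ contains $x^{2}-x$ for every variable, Lemma~\ref{lem_kreuzer_rad} gives that it is radical, so it is enough to check the identity pointwise on $\variety{\Isos}$. I will set $T_{g} := \sum_{h \in V(\cH)} x_{gh}$ so that $s_{g} = T_{g} - k_{\cH}$, and the goal becomes the ``collapse'' $(T_{g} - k_{\cH})^{2} = T_{g} - k_{\cH}$ at every point of the variety.

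The structural input comes from the two hypotheses. First, $k_{\cG} = n_{\cG}$ forces the underlying $G$ to be edgeless: for each $g^{*} \in V(\cG)$, choosing $S = V(\cG)\setminus\{g^{*}\}$, which has size $n_{\cG}-1 = k_{\cG}-1$, reduces Eq.~\eqref{eq:kcover-fix} to $\sum_{g \in S} e_{gg^{*}} = 0$, making $g^{*}$ isolated. With every $e_{gg'}$ vanishing on the variety, the $G$-factor in Eq.~\eqref{eq:xgh_domset} is identically~$1$, and what remains says that for each $g$ the set $D_{g} := \{h : x_{gh}=1\}$ is a dominating set of the graph $H \in \cH$ determined by the point. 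Hence $T_{g} = |D_{g}| \geq \gamma(H) = k_{\cH}$. Combined with the trivial upper bound $T_{g} \leq n_{\cH} = k_{\cH}+1$, this yields the crucial squeeze $T_{g} - k_{\cH} \in \{0,1\}$.

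With this in hand, $(T_{g}-k_{\cH})^{2} = T_{g}-k_{\cH}$ holds at every variety point, and summing over $g \in V(\cG)$ gives
\begin{equation*}
  \sum_{g \in V(\cG)} s_{g}^{2} = \sum_{g \in V(\cG)} (T_{g} - k_{\cH}) = \biggl(\sum_{g,h} x_{gh}\biggr) - n_{\cG}k_{\cH} = f^{*}
\end{equation*}
on $\variety{\Isos}$, using $k_{\cG}k_{\cH} = n_{\cG}k_{\cH}$. Radicality then promotes this pointwise equality to the polynomial congruence modulo $\Isos$, delivering the claimed $1$-sos certificate.

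I do not anticipate a genuine obstacle. The one subtlety is that the ``simplification'' of Eq.~\eqref{eq:xgh_domset} after setting $e_{gg'}=0$ is a pointwise reduction on the variety, not an algebraic identity in $P_{\cG\Box\cH}$, which is precisely why the radicality step is essential rather than ornamental. A direct polynomial route---expanding $s_{g}^{2}$ via $x_{gh}^{2} \equiv x_{gh}$ and exhibiting explicit ideal witnesses for $(T_{g}-k_{\cH})(T_{g}-k_{\cH}-1) \in \Isos$---would be strictly more cumbersome and I would avoid it.
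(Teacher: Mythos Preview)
Your argument is correct. The pointwise computation is sound: with $k_\cG=n_\cG$ every $e_{gg'}$ vanishes on $\variety{\Isos}$, the surviving part of~\eqref{eq:xgh_domset} forces each $D_g=\{h:x_{gh}=1\}$ to dominate~$H$, and the two-value squeeze $T_g-k_\cH\in\{0,1\}$ gives the idempotence $(T_g-k_\cH)^2=T_g-k_\cH$. Radicality of $\Isos$ then legitimately upgrades the variety identity to the congruence.

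The paper proceeds differently: it carries out a purely algebraic computation, never passing to the variety. Writing $\sigma_{g,1}=\sum_h x_{gh}$ and $\sigma_{g,2}=\sum_{\{h,h'\}} x_{gh}x_{gh'}$, it first establishes two ideal identities---$\sigma_{g,1}^2\equiv \sigma_{g,1}+2\sigma_{g,2}$ (from $x_{gh}^2\equiv x_{gh}$) and $\sigma_{g,2}\equiv (n_\cH-1)\sigma_{g,1}-\tfrac12 n_\cH(n_\cH-1)$ (obtained by summing the relations $\prod_{j=1}^{2}(1-x_{gh_j})\equiv 0$ that follow from $k_\cH=n_\cH-1$)---and then expands $\sum_g(\sigma_{g,1}-k_\cH)^2$ symbolically to land on $f^\ast$. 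Your route is shorter and more conceptual for this specific case, because the ``$T_g-k_\cH$ is boolean'' collapse is exactly tailored to $n_\cH-k_\cH=1$. The paper's route, by contrast, is the base case of a uniform machinery (products and reductions of the symmetric sums $\sigma_{g,i}$) that it reuses for $k_\cH=n_\cH-2$ and conjecturally for all $k_\cH=n_\cH-j$; your idempotence trick does not extend once $T_g-k_\cH$ can take the value~$2$.
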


Note that the certificate of Theorem~\ref{thm:sosCertificaten_kGeqnG_kHeqnHminus1_easy} has the lowest degree possible and furthermore only uses very particular monomials of degree at most 1.

\subsection{Certificates for $k_\cG = n_\cG$ and $k_\cH = n_\cH-2$}

The next slightly more difficult case is the one for $k_\cG = n_\cG$ and $k_\cH = n_\cH-2$.
Also in this case we were able to find a certificate. 

\begin{theorem}\label{thm:sosCertificaten_kGeqnG_kHeqnHminus2_easy}
  For $k_\cG = n_\cG \geq 2$ and $k_\cH = n_\cH-2 \geq 2$,
  Vizing's conjecture is true as the polynomials
    \begin{equation*}
      s_{g} = \alpha
      + \beta \biggl( \sum_{h \in V(\cH)}   x_{gh} \biggr)
      + \gamma \biggl( \sum_{\set{h, h'} \subseteq V(\cH)} x_{gh}x_{gh'}\biggr)
      \quad\text{for $g \in V(\cG)$},
    \end{equation*}
    with
    \begin{align*}
      \alpha &= (n_\cH - 2)\bigl(n_\cH + \tfrac{1}{2}(n_\cH - 1)\sqrt{2}\bigr) \\
      \beta &= -\bigl( (2n_\cH-3) + (n_\cH - 2)\sqrt{2}\bigr) \\
      \gamma &= 2 + \sqrt{2}
    \end{align*}
    are a $2$-sos certificate of $f^{*}$.
  \end{theorem}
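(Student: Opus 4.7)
The plan is to reduce the theorem to a finite pointwise verification on $\variety{\Isos}$. By Corollary~\ref{proposition:VizSDPlsos} and Theorem~\ref{proposition:VizSDPConj}, it is enough to show the congruence $\sum_{g \in V(\cG)} s_g^2 \equiv f^{\ast} \mod \Isos$. Since $\Isos$ contains $x^2 - x$ in every variable it is radical (Lemma~\ref{lem_kreuzer_rad}), and its variety is finite with all coordinates in $\set{0,1}$; hence, via the Nullstellensatz applied to the radical ideal, two polynomials are congruent modulo $\Isos$ if and only if they agree on every point of $\variety{\Isos}$. So it suffices to verify the identity pointwise.

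I would then describe $\variety{\Isos}$ concretely under the hypothesis $k_\cG = n_\cG$. Any graph $G$ with $\gamma(G) = n_\cG$ must be edgeless, since if $\set{g,g'} \in E(G)$ then $V(\cG) \setminus \set{g}$ would already be a dominating set of size $n_\cG - 1$. Consequently, at every variety point all $e_{gg'}$ vanish, and the domination polynomial~\eqref{eq:xgh_domset} collapses to $(1 - x_{gh})\prod_{h' \neq h}(1 - e_{hh'} x_{gh'}) = 0$. This expresses exactly that, for each $g \in V(\cG)$, the set $D_g := \set{h \in V(\cH) : x_{gh} = 1}$ dominates the corresponding graph $H$; combined with $\gamma(H) = k_\cH = n_\cH - 2$, we obtain $\lvert D_g \rvert \in \set{n_\cH - 2,\, n_\cH - 1,\, n_\cH}$ at every variety point.

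The heart of the proof is then a direct three-case evaluation. Setting $y_g = \sum_{h} x_{gh}$ and $z_g = \sum_{\set{h,h'} \subseteq V(\cH)} x_{gh} x_{gh'}$, at a variety point we have $y_g = \lvert D_g \rvert$ and $z_g = \binom{\lvert D_g \rvert}{2}$. Substituting into $s_g = \alpha + \beta y_g + \gamma z_g$ and splitting the result into its rational and $\sqrt{2}$-parts, both parts cancel for $\lvert D_g \rvert = n_\cH - 2$ (yielding $s_g = 0$); a short calculation gives $s_g = -1$ for $\lvert D_g \rvert = n_\cH - 1$ and $s_g = \sqrt{2}$ for $\lvert D_g \rvert = n_\cH$. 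In all three cases $s_g^2 = \lvert D_g \rvert - (n_\cH - 2) = y_g - (n_\cH - 2)$, so summing over $g \in V(\cG)$ yields $\sum_g s_g^2 = \sum_{g,h} x_{gh} - n_\cG(n_\cH - 2) = f^{\ast}$ pointwise, and the radicality step upgrades this to the desired congruence modulo $\Isos$.

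The main obstacle is the joint cancellation of the rational and $\sqrt{2}$-coefficients in the critical case $\lvert D_g \rvert = n_\cH - 2$; this is precisely the algebraic coincidence that pins down the closed-form values of $\alpha$, $\beta$ and $\gamma$ given in the statement. Once that case is verified, the remaining two cases are short arithmetic in $\Q[\sqrt{2}]$ and the final summation over $g$ is immediate.
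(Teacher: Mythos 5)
Your proof is correct, and it takes a genuinely different route from the paper. The paper's proof (given in the appendix) is an algebraic reduction: it introduces the elementary symmetric polynomials $\SigSubset{i} = \sum_{|S|=i}\prod_{h\in S}x_{gh}$, proves reduction rules for products $\SigSubset{i}\SigSubset{j}$ and for the ``too-high-degree'' terms $\SigSubset{i+1+j}$ modulo $\Isos$ (Lemmas~\ref{lem:ProdOfSumsAsSumOfSums} and~\ref{lem:reduceSigTooLarge}, which rest on a Nullstellensatz lemma asserting $\prod_{j=1}^{3}(1-x_{gh_j})\equiv 0 \bmod\Isos$), expands $\sum_g s_g^2$ into $\Q$-linear combinations of $\SigSubset{0},\SigSubset{1},\SigSubset{2}$, and then matches coefficients against $f^\ast$ via the system of equations stated after the theorem. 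Your argument instead exploits radicality plus finiteness of $\variety{\Isos}$ directly: since $\Isos$ contains $x^2-x$ in every variable, a $\K[\sqrt{2}]$-coefficient polynomial lies in $\Isos$ exactly when it vanishes at every $\set{0,1}$-point of the variety, so it suffices to check the identity pointwise. You then characterize variety points under $k_\cG=n_\cG$ (all $e_{gg'}=0$, hence each $D_g=\set{h:x_{gh}=1}$ dominates $H$ and has size in $\set{n_\cH-2,n_\cH-1,n_\cH}$), and verify by a direct three-case computation in $\Q[\sqrt 2]$ that $s_g\in\set{0,-1,\sqrt 2}$ with $s_g^2=|D_g|-(n_\cH-2)$ in each case; I checked all three cases and the arithmetic is right.

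What each approach buys: the paper's reduction machinery is built once and reused across multiple theorems and the general conjecture $k_\cH = n_\cH - j$ (see the discussion after Conjecture~\ref{con:sosCertificaten_kGeqnG_kHeqnHminusi_easy}), so it scales and makes the coefficient-matching system explicit. Your pointwise argument is shorter and more illuminating for this specific theorem: it makes visible the real reason the certificate works, namely that $s_g$ is an interpolant taking the values $0$, $-1$, $\sqrt{2}$ on the three admissible sizes of $D_g$, so that $s_g^2$ is exactly the linear function $|D_g| - k_\cH$; this also explains why $\sqrt{2}$ is forced and gives a clean recipe for the analogous certificates at other $j$. The one point worth tightening in your write-up is the congruence-iff-pointwise-agreement step: since the $s_g$ have coefficients in $\Q(\sqrt 2)$, you should either invoke the fact that membership of a $\K$-coefficient polynomial in a $\K$-generated ideal can be tested over $\Kbar$ (a linear-algebra/descent argument) or appeal to the equality $\Isos = I(\variety{\Isos})$ for radical ideals already used in the proof of Lemma~\ref{lem:NonNegVarEquivEllSos}; as written the appeal to ``the Nullstellensatz'' elides this minor field-of-coefficients issue, though it is not a real gap.
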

  We want to point out, that this theorem is true
  whenever $\alpha$, $\beta$, $\gamma$ are solutions to the system of equations 
    \begin{subequations}
        \label{eq:systemForkHnHMinus2}
        \begin{align*}
        -(n_\cH-2) &= \alpha^2 + \frac{1}{4}n_\cH(n_\cH-1)(n_\cH-2)(3n_\cH-5)\gamma^2 \hspace{15pt} \nonumber\\
&\phantom{= \alpha^2}\; + n_\cH(n_\cH-1)(n_\cH-2)\beta\gamma,\\
        1 &= \beta^2 + 2\alpha\beta - (n_\cH-1)(n_\cH-2)(2n_\cH-3)\gamma^2 \nonumber\\
&\phantom{= \beta^2}\; - 3(n_\cH-1)(n_\cH-2)\beta\gamma,\\
        0 &= 2\beta^2 + 2\alpha\gamma + (1+3(n_\cH-1)(n_\cH-2))\gamma^2 \nonumber\\
&\phantom{= 2\beta^2}\; + 2(3n_\cH-4)\beta\gamma,
        \end{align*}
    \end{subequations} 
    and that in Theorem~\ref{thm:sosCertificaten_kGeqnG_kHeqnHminus2_easy}
    one particular easy solution is stated.

% \begin{lemma}\label{lem:solSystEqu_kGeqnG_kHeqnHminus2_easy}
%      A solution of the system of equations \eqref{eq:systemForkHnHMinus2} is given by
%      \begin{align*}
%      \alpha &= (n_\cH - 2)\biggl(n_\cH + \frac{1}{2}(n_\cH - 1)\sqrt{2}\biggr) \\
%      \beta &= -\biggl( (2n_\cH-3) + (n_\cH - 2)\sqrt{2}\biggr) \\
%      \gamma &= 2 + \sqrt{2}.
%      \end{align*}
% \end{lemma}

% Theorem~\ref{thm:sosCertificaten_kGeqnG_kHeqnHminus2_easy} and  Lemma~\ref{lem:solSystEqu_kGeqnG_kHeqnHminus2_easy} imply the following corollary.

% \begin{corollary}
%     Let $k_\cG = n_\cG \geq 2$ and $k_\cH = n_\cH - 2 \geq 2$. Then Vizing's conjecture holds for $n_\cG$, $k_\cG$, $n_\cH$ and $k_\cH$.
% \end{corollary}

Note that for all computationally considered instances, the SDP
for $\ell=1$ was infeasible, so for all of those instances there
is no $1$-sos certificate and one really needs monomials of degree
2 in the $s_i$ in order to obtain a certificate.    
Nevertheless, degree 2 is still very low. Furthermore also in this sum-of-squares
certificate only very particular monomials are used; it can be considered
sparse therefore. This is confirmed by the following example.

\begin{example}
If we consider the case $k_\cG= n_\cG = 4$, $n_\cH = 5$ and $k_\cH=3$, there are $432$ monomials of degree at most 2 but the certificate of Theorem~\ref{thm:sosCertificaten_kGeqnG_kHeqnHminus2_easy} uses only $61$ of them.
\hfill $\Box$
\end{example}

\subsection{Certificates for $k_\cG = n_\cG$ and $k_\cH = n_\cH-j$}

When taking a closer look at the certificates in
Theorem~\ref{thm:sosCertificaten_kGeqnG_kHeqnHminus1_easy} and
Theorem~\ref{thm:sosCertificaten_kGeqnG_kHeqnHminus2_easy}, there
seems to be a structure in the certificates we found so far.  In
particular the certificate for the case $k_\cG = n_\cG$ and
$k_\cH = n_\cH-j$ seems to be a $j$-sos certificate.
Hence the following conjecture intuitively seems to be the generalization. 
\begin{conjecture}\label{con:sosCertificaten_kGeqnG_kHeqnHminusi_easy}
    Let $k_\cG = n_\cG$ and $k_\cH = n_\cH-j$ for $j\geq 3$. Then
    \begin{equation*}
    s_{g} = \sum_{q=0}^{j} \alpha_q \biggl(\sum_{\substack{S \subseteq V(\cH)\\|S|=q}}\;\prod_{h \in S} x_{gh} \biggr) \quad\text{for $g \in V(\cG)$},
    \end{equation*}
    where $\alpha_q$ are the solutions to a certain system of polynomial equations,
    are a $j$-sos certificate of $f^{*}$.
\end{conjecture}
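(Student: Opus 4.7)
The plan is to exploit the rigidity forced by $k_\cG = n_\cG$: the constraints~\eqref{eq:kcover-fix} applied to each subset $S \subseteq V(\cG)$ of size $n_\cG - 1$ force every edge variable $e_{gg'}$ with $g \neq g'$ to vanish on the variety, so $G$ must be the empty graph on $n_\cG$ vertices. With $G$ edgeless, the product-graph constraint~\eqref{eq:xgh_domset} collapses to $(1 - x_{gh})\prod_{h' \neq h}(1 - e_{hh'}x_{gh'}) = 0$; that is, for each fixed $g$ the set $\set{h : x_{gh} = 1}$ is a dominating set of $H$. Hence at every point of $\variety{\Isos}$ the quantity $m_g := \sum_{h \in V(\cH)} x_{gh}$ lies in $\set{k_\cH, k_\cH + 1, \ldots, n_\cH} = \set{k_\cH, \ldots, k_\cH + j}$, and $f^{\ast} = \sum_{g \in V(\cG)}(m_g - k_\cH)$ since $k_\cG = n_\cG$.

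Because the proposed $s_g$ is symmetric in $h \in V(\cH)$ and the variety forces $x_{gh} \in \set{0,1}$, the elementary symmetric polynomial $\sum_{|S|=q}\prod_{h \in S}x_{gh}$ evaluates to $\binom{m_g}{q}$. Thus $s_g$ evaluates pointwise to a univariate polynomial $P(m_g) := \sum_{q=0}^{j}\alpha_q\binom{m_g}{q}$ of degree at most $j$. Since $\Isos$ is radical (as used in the proof of Corollary~\ref{proposition:VizSDPlsos}), establishing the congruence $\sum_g s_g^2 \equiv f^{\ast} \bmod \Isos$ reduces to the pointwise identity $\sum_g P(m_g)^2 = \sum_g (m_g - k_\cH)$ on $\variety{\Isos}$. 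As the $m_g$ can be chosen independently in $\set{k_\cH,\ldots,k_\cH+j}$ (by selecting any dominating set in the $g$-th copy of $H$), this identity is equivalent to
\begin{equation*}
  P(k_\cH + r)^2 = r, \qquad r = 0, 1, \dots, j,
\end{equation*}
which is the ``certain system of polynomial equations'' referenced in the conjecture.

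To see that the system has a real solution, fix any signs $\epsilon_r \in \set{\pm 1}$ and set $P(k_\cH + r) = \epsilon_r\sqrt{r}$; this is a linear system in $\alpha_0,\dots,\alpha_j$ with coefficient matrix $\bigl(\binom{k_\cH + r}{q}\bigr)_{0 \leq r,q \leq j}$. The family $\set{\binom{m}{q} : 0 \leq q \leq j}$ is a basis for polynomials in $m$ of degree at most $j$, and evaluation at the $j+1$ distinct integers $k_\cH,\dots,k_\cH+j$ is an isomorphism, so this matrix is invertible; the $\alpha_q$ are uniquely determined and live in $\Q(\sqrt{2},\sqrt{3},\dots,\sqrt{j})$. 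Plugging such $\alpha_q$ back into $s_g$ gives $\sum_g s_g(z)^2 = \sum_g P(m_g(z))^2 = \sum_g(m_g(z) - k_\cH) = f^{\ast}(z)$ for every $z \in \variety{\Isos}$, and radicality lifts this to the required congruence modulo $\Isos$.

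The main obstacle is not existence but compact presentation. For $j = 2$, Theorem~\ref{thm:sosCertificaten_kGeqnG_kHeqnHminus2_easy} expresses the $\alpha_q$ neatly in $\Q(\sqrt{2})$; for $j \geq 3$ the coefficients generically lie in the compositum $\Q(\sqrt{2},\sqrt{3},\dots,\sqrt{j})$, whose degree over $\Q$ grows as $2^j$, and the resulting closed forms inflate quickly. Thus, while the argument above settles the existential form of the conjecture (a valid $j$-sos certificate with the stated monomial support exists), distilling the $\alpha_q$ into simple explicit formulas analogous to those for $j = 1,2$ is likely the most delicate remaining issue and may require either a clever inductive construction on $j$ or targeted algebraic simplifications within the quadratic extensions.
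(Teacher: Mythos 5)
This statement is labeled a \emph{conjecture} in the paper, and the paper does not prove it; it only offers a sketch of a \emph{possible} approach in an appendix section, which proceeds by symbolically expanding $\sum_g s_g^2$, replacing products $\SigSubset{i}\SigSubset{j}$ of elementary symmetric polynomials by sums via Lemma~\ref{lem:ProdOfSumsAsSumOfSums}, reducing high-degree terms via Lemma~\ref{lem:prodxghj_kGeqnG_kHeqnHMinusi}, and then matching coefficients to get a polynomial system $\phi_0(\alpha)=-k_\cH$, $\phi_1(\alpha)=1$, $\phi_r(\alpha)=0$ for $2\le r\le j$. Crucially, the paper remarks that the resulting system becomes very messy, that no closed form is evident, and that \emph{even solvability} is unclear: ``it is not clear how to obtain a closed form solution of the system of equations. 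Even for the case $i=2$ it is not obvious that the system of equations \dots even has a solution.''

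Your proposal takes a genuinely different and cleaner route, and it is correct. Instead of manipulating symmetric polynomials modulo the ideal, you pass to pointwise evaluation on the finite real variety: since $k_\cG=n_\cG$ forces every $e_{gg'}\equiv 0$ (Lemma~\ref{lem:eggPrimeZero_kGeqnG}), each column $D_g=\set{h:x_{gh}=1}$ is an independent dominating set of $H$, so $m_g:=\sum_h x_{gh}$ ranges freely over $\set{k_\cH,\dots,k_\cH+j}$; on $\set{0,1}$-points the elementary symmetric polynomial of degree $q$ equals $\binom{m_g}{q}$, so $s_g$ collapses to the univariate $P(m_g)$; radicality of $\Isos$ (plus finiteness and reality of the variety) upgrades the pointwise identity $\sum_g P(m_g)^2=\sum_g(m_g-k_\cH)$ to the required congruence; and varying the $m_g$ independently (in particular taking them all equal) shows the congruence is \emph{equivalent} to $P(k_\cH+r)^2=r$ for $r=0,\dots,j$. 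You then observe that after choosing signs this becomes a nonsingular linear interpolation problem in $\alpha_0,\dots,\alpha_j$, since $\set{\binom{m}{q}:0\le q\le j}$ is a basis for degree-$\le j$ polynomials and evaluation at $j+1$ distinct integers is an isomorphism. This actually settles the existence question that the paper leaves open: a valid $j$-sos certificate of the conjectured form always exists, with $\alpha_q$ in the real field $\Q(\sqrt{2},\dots,\sqrt{j})$. What the paper's symbolic approach would buy, if completed, is explicit closed-form coefficients as in Theorems~\ref{thm:sosCertificaten_kGeqnG_kHeqnHminus1_easy} and~\ref{thm:sosCertificaten_kGeqnG_kHeqnHminus2_easy}; what your approach buys is a short, conceptual proof of solvability (and hence of the conjecture's existential content), together with an \emph{exact} characterization of all admissible coefficient vectors $\alpha$ as interpolants of $m\mapsto\pm\sqrt{m-k_\cH}$ in the binomial basis. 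One small presentational caveat: the ``certain system'' in the conjecture should be identified with your system $P(k_\cH+r)^2=r$, $r=0,\dots,j$, which is equivalent to but much more transparent than the $\phi_r(\alpha)$-system the paper envisioned; and for this to make the $s_g$ literally lie in the ring $P=\K[\cdot]$ one must take $\K$ to contain $\sqrt{2},\dots,\sqrt{j}$, exactly as the paper implicitly does already for $j=2$.
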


%%%%%%%%%%%%%%%%%%%%%%%%%%%%%%%%%%%%%%%%
\section{Conclusions and Future Work} \label{sec_conc}

In this project, we modeled Vizing's conjecture as an ideal/\allowbreak poly\-nomial
pair such that the polynomial was nonnegative on the variety of a
particularly constructed ideal if and only if Vizing's conjecture was
true. We were able to produce low-degree, sparse Positivstellens\"atze
certificates of nonnegativity for certain classes of graphs using an
innovative collection of techniques ranging from semidefinite
programming to clever guesswork to computer algebra. For example,
Vizing's conjecture with parameters $k_\cG = n_\cG$ and $k_\cH = n_\cH-1$
has a 1-sum-of-squares Positivstellensatz
and with parameters $k_\cG = n_\cG$ and $k_\cH = n_\cH-2$ has a 2-sum-of-squares
Positivstellensatz. We have conjectured a broader combinatorial
pattern based on these certificates, but proving validity is left to
future work. However, at this time, we have indeed proved Vizing's
conjecture for several classes of graphs using sum-of-squares
certificates. Although we have not advanced what is currently known
about Vizing's conjecture, we have introduced a completely new
technique (still to be thoroughly explored) to the literature of
possible approaches.

For future work, we intend to continue pushing the computational
aspect of this project. Additionally, it is very easy to change the
model from a Positivstellensatz certificate to a Hilbert's
Nullstellensatz certificate, and thus change from numeric
semidefinite programming to exact arithmetic linear algebra. This
approach must also be thoroughly investigated. Finally, it would be
very interesting to conjecture a global relationship between the
values of $n_\cG$, $n_\cH$, $k_\cG$ and $k_\cH$, and the degree of the
Positivstellensatz certificate, and perhaps even recast the conjecture
in terms of the theta body hierarchy described in \cite{theta}.

%%%%%%%%%%%%%%%%%%%%%%%%%%%%%%%%%%%%%%%%
\section*{Acknowledgments} The authors gratefully acknowledge the support of Fulbright Austria (via a Visiting Professorship at AAU Klagenfurt). This project has received funding from the European Union’s Horizon~2020 research and innovation programme under the Marie Sk\l{}odowska-Curie grant agreement No~764759 and the Austrian Science Fund (FWF): I 3199-N31.

%%%%%%%%%%%%%%%%%%%%%%%%%%%%%%%%%%%%%%%%

\bibliographystyle{amsplain}
{\small\bibliography{papers}}

{\small
  \vspace*{1ex}\noindent
  Elisabeth Gaar,
  \href{mailto:elisabeth.gaar@aau.at}{\url{elisabeth.gaar@aau.at}},
  Alpen-Adria-Universität Klagenfurt,
  Universitätsstraße 65--67, 9020 Klagenfurt, Austria

  \vspace*{1ex}\noindent
  Daniel Krenn,
  \href{mailto:math@danielkrenn.at}{\url{math@danielkrenn.at}},
  Uppsala Universitet,
  Box 480, 75106 Uppsala, Sweden
  
  \vspace*{1ex}\noindent
  Susan Margulies,
  \href{mailto:margulie@usna.edu}{\url{margulie@usna.edu}},
  United States Naval Academy, Annapolis, MD, USA
  
  \vspace*{1ex}\noindent
  Angelika Wiegele,
  \href{mailto:angelika.wiegele@aau.at}{\url{angelika.wiegele@aau.at}},
  Alpen-Adria-Universität Klagenfurt,
  Universitätsstraße 65--67, 9020 Klagenfurt, Austria
}

\end{document}